\title{Estimate of the number of one-parameter
families
of modules over a tame algebra\footnotetext{This is a preliminary version of the paper published in Linear Algebra Appl. 365 (2003) 115--133.}}
\author{Thomas Br\"ustle\\
 Fakult\"at f\"ur Mathematik,
 Universit\"at Bielefeld\\
 Postfach 100 131 D-33501 Bielefeld,
 Germany\\
 bruestle@mathematik.uni-bielefeld.de
\medskip\\ and\medskip\\
Vladimir V. Sergeichuk%
\thanks{The research was done while this
author was visiting the University of
Bielefeld and the University of Utah
supported by Sonderforschungsbereich 343 and
NSF grant DMS-0070503.}\\ Institute of
Mathematics\\ Tereshchenkivska 3, Kiev,
Ukraine\\sergeich@imath.kiev.ua}
\date{}
\begin{document}

\newcommand{\im}{\mathop{\rm Im}\nolimits}
\newcommand{\diag}{\mathop{\rm diag}\nolimits}
\newcommand{\Ker}{\mathop{\rm Ker}\nolimits}
\newcommand{\rad}{\mathop{\rm Rad}\nolimits}
\newcommand{\Aut}{\mathop{\rm Aut}\nolimits}
\newcommand{\End}{\mathop{\rm End}\nolimits}

\renewcommand{\le}{\leqslant}
\renewcommand{\ge}{\geqslant}

\newtheorem{theorem}{Theorem}[section]
\newtheorem{lemma}{Lemma}[section]

\maketitle

\begin{abstract}
The problem of classifying modules over a
tame algebra $A$ reduces to a block matrix
problem of tame type whose indecomposable
canonical matrices are zero- or
one-parameter. Respectively, the set of
nonisomorphic indecomposable modules of
dimension at most $d$ divides into a finite
number $f(d,A)$ of modules and one-parameter
series of modules.

We prove that the number of canonical
parametric block matrices of size $m\times
n$ and a given partition into blocks is
bounded by $4^s$, where $s$ is the number of
free entries, $s\le mn$. Basing on this
estimate, we prove that
$$
f(d,A)\le {\binom {d+r} r}
4^{d^2(\delta_1^2+\dots +\delta_r^2)}\le
(d+1)^r4^{d^2(\dim A)^2},
$$
where $r$ is the number of nonisomorphic
indecomposable projective left $A$-modules
and $\delta_1,\dots,\delta_r$ are their
dimensions.

{\it AMS classification:} 15A21; 16G60.

{\it Keywords:} Canonical matrices;
Classification; Tame algebras.
 \end{abstract}

\section{Introduction}   \label{s1}

Matrices and finite dimensional algebras are
considered over an algebraically closed
field $k$.

Gabriel, Nazarova, Roiter, Sergeichuk, and
Vossieck \cite{gab_vos} studied matrix
problems, in which the row-transformations
are given by a category and the column
transformations are arbitrary. They
interpreted ${m\times n}$ matrices as points
of the affine space $k^{m\times n}$ of all
${m\times n}$ matrices and proved that for a
tame matrix problem and every ${m\times n}$
there exists a full system of nonisomorphic
indecomposable $m\times n$ matrices that
consists of a finite number of points and
punched straight lines. This result was
extended to modules over a tame finite
dimensional algebra $A$: for every
$d\in\mathbb{N}$ there exists an almost full
(except for a finite number of modules)
system of nonisomorphic indecomposable
$d$-dimensional modules that consists of a
finite number $\rho_A(d)$ of punched lines
(an $A$-module of dimension $d$ was
considered as a point of the affine space
$k^{d\times d}\oplus\dots \oplus k^{d\times
d}$; the number of summands $k^{d\times d}$
is a number of generators of $A$).

Br\"ustle \cite{bru} proved, that
\begin{equation}\label{0.1}
\rho_A(d)\le \dim (\rad A)\cdot e^{2^6
3^{d-1} (d-1)^{2d-1}}.
\end{equation}

Sergeichuk \cite{ser} extended the results
of \cite{gab_vos} to block matrix problems
in which rows and columns transformations
are given by triangular matrix algebras: If
the matrix problem is of tame type, then for
every $m\times n$ there exists a finite set
of zero- and one-parameter matrices
\begin{equation}\label{0.2}
  M_1,\dots,M_{t_1},\, N_1(\lambda_1),
  \dots, N_{t_2}(\lambda_{t_2})
\end{equation}
such that the set of indecomposable
canonical $m\times n$ matrices is $$
\{M_1,\dots,M_{t_1}\}\cup \{N_1(a)\,|\, a\in
k\}\cup \dots \cup \{N_{t_2}(a)\,|\, a\in
k\}; $$ it may be interpreted as a set of
points and straight lines in the affine
space $k^{m\times n}$. The proof was based
on Belitski\u\i's algorithm \cite{bel} (see
also \cite{bel1}) for reducing a matrix to
canonical form; two matrices may be reduced
one to the other if and only if they have
the same canonical form.

Drozd \cite{dro1} proposed the following
reduction of the problem of classifying
modules over an algebra $A$ to a matrix
problem. Let $P_1,\dots, P_r$ be all
nonisomorphic indecomposable projective
right $A$-modules. For every right module
$M$ over $A$, there exists an exact sequence
\begin{equation*}\label{0.3}
P_1^{p_1}\oplus\dots\oplus P_r^{p_r}
\stackrel{\varphi}{\longrightarrow}
P_1^{q_1}\oplus\dots\oplus P_r^{q_r}
\stackrel{\psi}{\longrightarrow}
M\longrightarrow 0,
\end{equation*}
where $X^{l}:=X\oplus\dots\oplus X$ ($l$
times). The homomorphism $\varphi$ is
determined up to transformations
$\varphi\mapsto g\varphi f$, where $f$ and
$g$ are automorphisms of $\oplus_iP_i^{p_i}$
and $\oplus_iP_i^{q_i}$. The $\varphi$, $f$,
and $g$ can be given by their matrices in
bases of the spaces $\oplus_iP_i^{p_i}$ and
$\oplus_iP_i^{q_i}$ over $k$. This reduces
the problem of classifying modules over
algebras to block matrix problems, which
were studied in \cite{ser}. The modules that
correspond to the canonical matrices form a
full system of nonisomorphic modules;
indecomposable modules correspond to
indecomposable matrices.

In this article, we obtain the following
estimates:
\begin{itemize}
  \item [(i)] If a block matrix problem is of
  tame type, then the number of canonical
parametric block matrices \eqref{0.2} of
size $m\times n$ and a given partition into
blocks is bounded by $4^s$, where $s$ is the
number of free entries, $s\le mn$.

  \item [(ii)] If an algebra $A$ is of
  tame type, then the number of zero-
  and one-parameter matrices that give
  a full system of nonisomorphic
  indecomposable modules of dimension
  at most $d$ is bounded by
 $$
{\binom {d+r} r} 4^{d^2(\delta_1^2+\dots
+\delta_r^2)},
$$
where $r$ is the number of nonisomorphic
indecomposable projective left $A$-modules
and $\delta_1,\dots,\delta_r$ are their
dimensions.
\end{itemize}
Here the first estimate is optimal and the second one improves significantly the estimate
from [3].
The paper is organized as follows: in Section 2, we introduce the concept of standard
linear matrix problems and recall Belitskii's algorithm. Section 3 is devoted to
the proof of the estimate (i), Section 4 is concerned with the corresponding estimate
(ii) for modules over a tame algebra.

\section{Belitski\u{\i}'s algorithm
for linear matrix problems} \label{s2}

A block matrix $M=[M_{ij}]$, $M_{ij}\in
k^{m_i\times n_j}$, will be called an
$\underline{m}\times\underline{n}$ {\it
matrix}, where
$\underline{m}=(m_1,m_2,\ldots)$ and
$\underline{n}=(n_1,n_2,\ldots)$.

A linear matrix problem is the canonical
form problem for
$\underline{n}\times\underline{n}$ matrices
whose blocks satisfy a certain system of
linear homogeneous equations. Solving this
system, we select {\it free blocks} that are
arbitrary; the other blocks are their linear
combinations. The set of admissible
transformations consists of elementary
transformations within strips, additions of
linear combinations of rows of the $i$th
strip to rows of the $j$th strip for certain
$i>j$, and additions of linear combinations
of columns of the $i$th strip to columns of
the $j$th strip for certain $i<j$.
Elementary transformations and additions may
be linked: making elementary transformations
within a horizontal strip, we must produce
the same elementary transformations within
all horizontal strips linked with it and
inverse elementary transformations within
all vertical strips linked with it. Making
an addition between strips, we must produce
all linked with it additions.

Applying Belitski\u\i's algorithm
(\cite{bel},\cite{ser}), we can reduce a
block matrix by these transformations to
canonical form; two block matrices may be
reduced one to the other if and only if they
have the same canonical form.

If the matrix problem is of tame type (that
is, it does not contain the problem of
classifying pairs of matrices up to
simultaneous similarity, then the set of
direct-sum-indecomposable canonical
$\underline{n}\times\underline{n}$ matrices
forms a finite number of points and straight
lines in the affine space of
$\underline{n}\times\underline{n}$ matrices
(see \cite[Theorem 3]{ser}). In the article,
we prove that this number is bounded by
$4^s$, where $s$ is the number of entries in
free blocks.

Let us sketch a more formal definition of a
linear matrix problem (see \cite[Sect.
2.2]{ser}).

An algebra $\varGamma \subset k^{t\times t}$
of upper triangular matrices is a {\it basic
matrix algebra} if
\begin{equation}\label{3.00}
\begin{bmatrix}
             a_{11}&\cdots&a_{1t} \\
                 &\ddots&\vdots \\
              \text{\Large 0} & & a_{tt}
            \end{bmatrix}\in\varGamma
\quad  {\rm implies} \quad
        \begin{bmatrix}
           a_{11}  & & \text{\Large 0} \\
                       &\ddots&   \\
              \text{\Large 0}  & & a_{tt}
         \end{bmatrix}\in\varGamma.
\end{equation}
The diagonals $(a_{11},a_{22},\dots,a_{tt})$
of the matrices from $\varGamma$ form a
subspace in $k^t =k\oplus\dots\oplus k$,
which may be given by a system of equations
of the form $a_{ii}=a_{jj}$. Define an
equivalence relation in $T=\{1,\dots,t\}$
putting
\begin{equation}\label{4.0a}
\text{$i\sim j$ if and only if
$\diag(a_1,\dots,a_t)\in \varGamma$ implies
$a_i=a_j$.}
\end{equation}
We say that a sequence of nonnegative
integers $\underline{n}=(n_1,n_2,\dots,n_t)$
is a {\it step-sequence} if $i\sim j$
implies $n_i=n_j$.

A {\it linear matrix problem given by a
pair}
\begin{equation}     \label{3.4aa}
(\varGamma,\cal M), \quad \varGamma {\cal
M}\subset {\cal M},\ {\cal M}\varGamma
\subset {\cal M},
\end{equation}
consisting of a basic $t\times t$ algebra
$\varGamma$ and a vector space ${\cal
M}\subset k^{t\times t}$, is the canonical
form problem for matrices $M\in {\cal
M}_{\underline{n}\times\underline{n}}$ with
respect to transformations
\begin{equation} \label{3.01}
M\longmapsto S^{-1}MS,\qquad S\in
\varGamma_{\underline{n}\times
\underline{n}}^{*},
\end{equation}
where $\underline{n}=(n_1,\dots,n_t)$ is a
step-sequence,
$\varGamma_{\underline{n}\times
\underline{n}}$ and ${\cal
M}_{\underline{n}\times\underline{n}}$
consist of
$\underline{n}\times\underline{n}$ matrices
whose blocks satisfy the same systems of
linear homogeneous equations as the entries
of $t\times t$ matrices from $\varGamma$ and
$\cal M$, respectively, and
$\varGamma_{\underline{n}\times
\underline{n}}^{*}$ denotes the set of
nonsingular matrices from
$\varGamma_{\underline{n}\times
\underline{n}}$. ($\varGamma$ and ${\cal M}$
are subspaces of $k^{t\times t}$; they may
be given by systems of linear homogeneous
equations of the form
$$
 \sum_{(i,j)\in{\cal
I} \times{\cal J}}d_{ij}x_{ij} = 0,
$$
where ${\cal I},{\cal J}\in
\{1,\dots,t\}/\!\sim$ are equivalence
classes.)

Let us outline Belitski\u\i's algorithm (it
has been detailed in \cite{ser}) for
reducing a matrix
 $$
 M=\begin{bmatrix}
       M_{11} & \cdots  & M_{1t} \\
     \hdotsfor{3} \\
        M_{t1} & \cdots  & M_{tt}
    \end{bmatrix}
\in{\cal
M}_{\underline{n}\times\underline{n}} $$ to
canonical form by transformations
\eqref{3.01}. We assume that the blocks of
$M$ (and of every block matrix) are ordered
starting from the lower strip:
\begin{equation}      \label{4.1}
M_{t1}<M_{t2}<\dots<M_{tt}<M_{t-1,1}<
M_{t-1,2}<\dots<M_{t-1,t}<\cdots
\end{equation}
In the set $\{M_{ij}\}$ of blocks of $M$, we
select the set of free blocks such that
every unfree block is a linear combination
of free blocks that preceding it with
respect to the ordering \eqref{4.1}. The
entries of free blocks will be called the
{\it free entries}.

On the first step, we reduce the block
$M_{t1}$. It is reduced by transformations
\begin{equation} \label{5.1}
M_{t1}\longmapsto
S^{-1}_{tt}M_{t1}S_{11},\qquad S\in
\varGamma_{\underline{n}\times
\underline{n}}^*.
\end{equation}

If $1\nsim t$, then $M_{t1}$ is reduced by
arbitrary equivalence transformations. We
reduce it to the form
\begin{equation} \label{5.2}
      \left[ \begin{array}{cc}
                    0  &  I      \\
                    0  &  0
            \end{array} \right]
\end{equation}
and extend its division into substrips onto
the first vertical and the first horizontal
strips of $M$.

If $1\sim t$, then $M_{t1}$ is reduced by
arbitrary similarity transformations. We
reduce it to a {\it Weyr matrix} (which is
obtained from a Jordan matrix by
simultaneous permutations of rows and
columns, see \cite[Sect. 1.3]{ser}):
\begin{equation}       \label{5.3}
W=W_{\alpha_1}\oplus\dots\oplus
W_{\alpha_r},\quad \alpha_1\prec\dots\prec
\alpha_r,
\end{equation}
where $\prec$ is a linear order in $k$ (if
$k$ is the field of complex numbers, we use
the lexicographic ordering), and
\begin{equation}       \label{5.4}
W_{\alpha_i}= \left[\begin{tabular}{cccc}
$\alpha_iI_{m_{i1}}$&$W_{i1}$&&{\Large 0}\\
&$\alpha_iI_{m_{i2}}$&$\ddots$&\\
&&$\ddots$&$W_{i,q_i-1}$\\ {\Large
0}&&&$\alpha_iI_{m_{iq_i}}$
\end{tabular}\right],\quad
W_{ij}=
\begin{bmatrix}
I\\0
\end{bmatrix},
\end{equation}
$m_{i1}\ge\dots\ge m_{iq_i}$. We make the
most coarse partition of $W$ into substrips
for which all diagonal subblocks have the
form $\alpha_i I$ and all off-diagonal
subblocks are $0$ and $I$ (all matrices
commuting with $W$ are upper block
triangular with respect to this partition).
We extend this division of $M_{t1}=W$ into
substrips onto the first vertical and the
first horizontal strips of $M$.

Then we restrict the set of admissible
transformations with $M$ to those
transformations \eqref{5.1} that preserve
$M_{t1}$ (that is, $S^{-1}_{tt}M_{t1}S_{11}=
M_{t1}$). It may be proved that the algebra
of matrices $$ \Lambda_1= \{S=[S_{ij}]\in
\varGamma_{\underline{n}\times
\underline{n}}\, |\, M_{t1}S_{11}=
S_{tt}M_{t1}\} $$ also has the form
$\varGamma'_{\underline{n'}\times
\underline{n'}}$, where $\varGamma'$ is a
basic matrix algebra. The entries of
$M_{t1}$ are the {\it reduced entries} of
$M$.

On the second step, we take the first
unreduced (that is, does not contained in
$M_{t1}$) block with respect to the new
partition and reduce it.

On each step, we take the first unreduced
block $M_{pq}$ (with respect to a new
subdivision) and reduce it by those
admissible transformations that preserve all
reduced entries. If $M_{pq}$ is not free,
then it is the linear combination of
preceding free blocks that have been
reduced, and hence $M_{pq}$ is not changed
at this step. If $M_{pq}$ is free, then the
following three cases are possible:

(i) There exists a nonzero admissible
addition to $M_{pq}$ from other blocks.
Since admissible transformations are given
by upper block triangular matrices and we
use the ordering \eqref{4.1}, all nonzero
additions to $M_{pq}$ are from preceding
(reduced) blocks. We make $M_{pq}=0$ by
these additions.

(ii)  There exist no nonzero admissible
additions to $M_{pq}$ and it is reduced by
equivalence transformations. Then we reduce
$M_{pq}$ to the form \eqref{5.2}.

(iii)  There exist no nonzero admissible
additions to $M_{pq}$ and it is reduced by
similarity transformations. Then we reduce
$M_{pq}$ to a Weyr matrix.

At the end of this step, we make an
additional subdivision of $M$ into strips in
accordance with the block form of the
reduced $M_{pq}$ and restrict the set of
admissible transformations to those that
preserve $M_{pq}$.

The process stops after reducing the last
unreduced entry of $M$. The obtained
canonical matrix will be partitioned into
\begin{equation} \label{9}
M_1,M_2,\dots,M_{l(M)},
\end{equation}
where $M_i$ is the block that reduces at the
$i$th step. Each $M_i$ has the form $0$,
\eqref{5.1}, or is a Weyr matrix. We will
call \eqref{9} the {\it boxes} of $M$.

For instance,
\begin{equation*}      
M=  \left[  \begin{tabular}{c|c}
   $M_3$  &\!\!\!\!\!  \begin{tabular}{c|c}
       $M_6$ & $M_7$  \\  \hline
        $M_4$ &$ M_5$
 \end{tabular}\!\!\!
\\  \hline
                      $ M_1$  & $M_2$
            \end{tabular} \right]
=\left[ \begin{tabular}{c|c}
   $ \!\!\!\! \begin{array}{cc}-1&1\\
0&-1\end{array}$\!\!\!\! &
 \!\!\!\begin{tabular}{c|c}2&$0$ \\
 \hline 0&1\end{tabular}\!\!\! \\  \hline
                      $ 3I_2$  & $0$
            \end{tabular} \right],
            \qquad l(M)=7,
\end{equation*}
is a canonical $(2,2)\times(2,2)$ matrix for
the linear matrix problem given by the pair
$(\varGamma, k^{2\times 2})$, where
 $$ \varGamma=
\left\{\left.\begin{bmatrix} a&b\\ 0&a
\end{bmatrix}\,\right|\, a,b\in k\right\}.
$$

Let $M$ be a canonical matrix. Replacing all
diagonal entries of its free boxes that are
Weyr matrices by parameters, we obtain a
parametric matrix $M(\lambda_1,\dots,
\lambda_p)$. Its {\it domain of parameters}
$\cal{D}$ is the set of all
$(a_1,\dots,a_p)\in k^p$ for which
$M(a_1,\dots,a_p)$ is a canonical matrix. If
a parameter $\lambda_i$ is finite (that is,
the number of vectors of $\cal{D}$ with
distinct $a_i$ is finite), we replace
$\lambda_i$ by its values and obtain several
parametric matrices with a smaller number of
parameters. Repeating this process, we
obtain parametric matrices having only
infinite parameters. The obtained matrices
will be called {\it canonical parametric
matrices}.

Hence, the canonical form problem for
 $\underline{n}\times\underline{n}$
matrices with the same $\underline{n}$
reduces to the problem of finding a finite
number of canonical parametric matrices and
their domains of parameters.

\section{Estimate of the number of
canonical parametric matrices} \label{s3}

In this section, we study a linear matrix
problem of tame type. As was proved in
\cite{ser}, each of its canonical parametric
matrices, up to simultaneous permutations of
rows and columns, has the form
\begin{equation} \label{12}
N_1(\lambda_1)\oplus\dots\oplus
N_p(\lambda_p) \oplus R_1\oplus\dots\oplus
R_q,\qquad p\ge 0,\quad q\ge 0,
\end{equation}
where $N_i(\lambda_i)$ and $R_j$ are
indecomposable canonical one- and
zero-parameter canonical matrices. The
purpose of the section is to prove the
following theorem.

\begin{theorem}   \label{t1}
If a linear matrix problem is of tame type,
then the number of its canonical parametric
matrices of size
$\underline{n}\times\underline{n}$ is
bounded by $4^{s(\underline{n})}$, where
$s(\underline{n})$ is the number of free
entries in an
$\underline{n}\times\underline{n}$ matrix.
\end{theorem}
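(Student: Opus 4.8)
The plan is to count the canonical parametric matrices by encoding each one as a string over a small alphabet, where the length of the string is bounded by the number of free entries $s(\underline{n})$. Since each canonical parametric matrix is built up box by box through Belitski\u\i's algorithm, and since the structure of the algorithm is deterministic once we specify the \emph{type} of reduction performed at each step, the idea is to record, for each free entry, a bounded amount of combinatorial data that determines how that entry is resolved. If we can show that at most $4$ distinct outcomes are possible per free entry, then the total number of canonical parametric matrices is at most $4^{s(\underline{n})}$.

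First I would fix the partition $\underline{n}$ and analyze what distinguishes two canonical parametric matrices of that size. By the decomposition \eqref{12}, a canonical parametric matrix is determined by its list of indecomposable summands $N_i(\lambda_i)$ and $R_j$, together with their multiplicities; but more usefully, it is determined by the entire record of the run of Belitski\u\i's algorithm. At each step the algorithm takes the first unreduced free block $M_{pq}$ and resolves it according to exactly one of the three cases (i), (ii), (iii). In case (i) the block is set to $0$; in case (ii) it is brought to the form \eqref{5.2} with $0$ and $I$ blocks; in case (iii) it becomes a Weyr matrix whose diagonal entries are parameters (after passing to canonical parametric form). So the \emph{type} of each box is one of a small finite list, and the partition induced on subsequent strips is then forced.

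The key step is to charge the choices to free \emph{entries} rather than to boxes, and to bound the per-entry branching by $4$. I would argue that within a single free block $M_{pq}$, once we know which of the three cases applies, the resulting normal form \eqref{5.2} or the Weyr form \eqref{5.4} is determined by combinatorial data (the ranks, the block sizes $m_{ij}$, the multiplicities $q_i$) that can be read off entry by entry: each free entry contributes a bounded choice recording whether, under the reduction, its position ends up as a $0$, a $1$ of an identity block, a diagonal parameter slot of a Weyr block, or an off-diagonal Weyr slot. That is four qualitative possibilities per free entry, and I would verify that this local data, accumulated over all free entries in the algorithm's order \eqref{4.1}, uniquely reconstructs the canonical parametric matrix (because the algorithm is deterministic given these choices). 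Hence the number of distinct canonical parametric matrices is at most $4^{s(\underline{n})}$.

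The hard part will be making the entry-level encoding genuinely injective while keeping the alphabet size exactly $4$. The subtlety is that the Weyr form in case (iii) carries continuous parameters $\alpha_i$, and after replacing finite parameters by values one may split a single box into several canonical parametric matrices; I must confirm that passing to \emph{infinite} parameters (as in the construction of canonical parametric matrices) does not inflate the count beyond what the free entries already allow, and that the successive refinements of the strip partition do not introduce extra free entries uncounted by $s(\underline{n})$. I would also need to check the boundary bookkeeping: that blocks which are \emph{not} free are genuinely forced as linear combinations of preceding reduced free blocks and thus contribute no independent choice, so that only the $s(\underline{n})$ free entries drive the branching. Establishing that the four cases per entry exhaust the possibilities, and that no two distinct canonical parametric matrices produce the same entry-string, is where the real work lies.
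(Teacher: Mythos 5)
Your proposal does not close the actual difficulty of the theorem, and the encoding you describe cannot be made injective with an alphabet of size $4$. The four ``slot types'' you propose (zero, identity entry, diagonal parameter slot, off-diagonal Weyr slot) only record the \emph{shape} of a canonical parametric matrix. But the count $4^{s(\underline{n})}$ must also absorb the branching that occurs when a parameter turns out to be \emph{finite}: by the definition of canonical parametric matrices, such a parameter is replaced by each of its finitely many admissible values in $k$, and each value yields a distinct canonical parametric matrix whose subsequent boxes may then be reduced differently (whether a nonzero addition to a later block exists depends on the scalar substituted). Two such matrices have identical slot-type strings under your encoding, so injectivity fails; worse, nothing in your scheme bounds \emph{how many} scalar values a finite parameter can take, and that number is not locally bounded by $4$ per entry in any evident way. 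You flag this issue in your last paragraph as ``the hard part,'' but it is not a boundary check --- it is the entire content of the proof.

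The paper handles exactly this point with two tools your proposal lacks. First, a Bezout-type lemma (Lemma \ref{l1}) about matrices whose entries are linear polynomials in one or two parameters: it bounds the number of exceptional parameter values (those for which no annihilating addition to the next block exists) by $m$, or by $s\le m^2$ with the sharper bound $s\le 3$ when $m=2$. Second, an amortized counting argument via the weight $t_M=3^{w(M)}$: the key inequality \eqref{16} of Lemma \ref{l2} bounds $\sum_{B\supset M} t_B/t_M$ rather than the raw number of extensions, so that specializing a parameter to one of its $\le m$ (or $\le s$) exceptional values is paid for by a weight drop of $3^{-m+1}$, and the estimates $m\cdot 3^{-m+1}\le 1$ and $3^{z_1z_2}+\min\{z_1,z_2\}\le 4^{z_1z_2}$ deliver the base $4$. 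The theorem then follows by induction on the depth, telescoping the products $t_C/t_M$. Without a replacement for Lemma \ref{l1} and without some amortization device playing the role of the weight, your per-entry branching bound of $4$ cannot be established, so the proposal as it stands has a genuine gap at its central step.
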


We first prove a technical lemma.

\begin{lemma} \label{l1}
Let
\begin{equation}\label{14.0}
A(x,y)=\begin{bmatrix}
  a_{11}(x,y) &\dots & a_{1n}(x,y) \\
  \hdotsfor{3} \\
  a_{m1}(x,y) & \dots & a_{mn}(x,y)
\end{bmatrix}
\end{equation}
be a matrix whose entries are linear
polynomials in $x$ and $y$, and let the rows
of $A(\alpha,\beta)$ be linearly independent
for all $(\alpha,\beta)\in k^2$ except for
$$ (\alpha_1,\beta_1),\
(\alpha_2,\beta_2),\dots,
(\alpha_s,\beta_s). $$ Then $s\le m^2$;
moreover, $s\le 3$ if $m=2$.
\end{lemma}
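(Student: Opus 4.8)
The plan is to read off the exceptional points as the common zero locus of the maximal minors and to bound that locus by Bézout's theorem. Since every entry $a_{ij}(x,y)$ is an affine-linear polynomial, each $m\times m$ minor of $A(x,y)$ is a polynomial of degree at most $m$, and the rows of $A(\alpha,\beta)$ are dependent exactly when all of these maximal minors vanish at $(\alpha,\beta)$. Because only finitely many points are exceptional, the rows are independent at a generic point, so the generic rank equals $m$; in particular $n\ge m$ and at least one maximal minor $f_1$ is not identically zero, with $\deg f_1\le m$.

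For the bound $s\le m^2$ I would argue componentwise on the curve $V(f_1)$. Factor $f_1=p_1^{e_1}\cdots p_k^{e_k}$ into distinct irreducibles. Each curve $V(p_j)\subset k^2$ is infinite, hence cannot be contained in the finite exceptional set, so some maximal minor $h_j$ does not vanish identically on $V(p_j)$; as $p_j$ is irreducible this means $p_j\nmid h_j$. Affine Bézout then gives $|V(p_j)\cap V(h_j)|\le \deg p_j\cdot\deg h_j\le m\deg p_j$, and every exceptional point lying on $V(p_j)$ also lies on $V(h_j)$. Summing over the factors and using $\sum_j\deg p_j\le\deg f_1\le m$,
\[
s\le \sum_j\bigl|V(p_j)\cap V(h_j)\bigr|\le m\sum_j\deg p_j\le m^2 .
\]

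The refinement $s\le 3$ for $m=2$ needs extra structure, since the estimate above only yields $4$; obtaining $3$ rather than $4$ is the main obstacle. Here I would pass to the dependency direction. If the two rows vanish simultaneously at some point, I translate that point to the origin; then the entries become linear homogeneous, the minors are homogeneous of degree two, and a finite union of lines through the origin reduces to the origin alone, so $s=1$. Otherwise each exceptional point $P$ carries a well-defined direction $[\lambda:\mu]\in\mathbb P^1$ with $\lambda r_1(P)+\mu r_2(P)=0$, where $r_1,r_2$ denote the rows. Two distinct exceptional points sharing one direction would force the affine-linear map $\lambda r_1+\mu r_2$ to vanish on the whole line joining them, producing infinitely many exceptional points; hence $P\mapsto[\lambda:\mu]$ is injective. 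A direction contributes an exceptional point precisely when the $n$ affine-linear equations $\lambda r_1+\mu r_2=0$ are solvable; homogenizing $x,y$ by a variable $z$, solvability means that an $n\times 3$ coefficient matrix $C(\lambda,\mu)$, whose entries are linear in $(\lambda,\mu)$, has rank at most $2$. This is the simultaneous vanishing of all $3\times 3$ minors of $C$, each a binary cubic in $(\lambda,\mu)$; finiteness rules out the possibility that all of them vanish identically (that would give a solvable direction for almost every $[\lambda:\mu]$, hence infinitely many exceptional points), and a nonzero binary cubic has at most three roots in $\mathbb P^1$. Thus at most three directions occur, so $s\le 3$.

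The two degenerate possibilities just isolated — an identically zero row, or all $3\times 3$ minors of $C$ vanishing — are exactly the places where care is required, and each is excluded (or reduced to $s\le 1$) by the finiteness hypothesis. That the bound is sharp is seen from $\left[\begin{smallmatrix}x&y&1\\1&x&y\end{smallmatrix}\right]$, whose maximal minors are $x^2-y,\ xy-1,\ y^2-x$ with common zero set $\{x^3=1,\ y=x^2\}$, exactly three points.
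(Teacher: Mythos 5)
Your bound $s\le m^2$ is correct and rests on the same Bézout input as the paper's Part~1; you organize it by decomposing the zero curve of one nonvanishing maximal minor into irreducible components and applying Bézout componentwise, where the paper instead runs an induction on the number of minors via their greatest common divisor. Either way the content is the same. For $s\le 3$ you take a genuinely different route from the paper (which normalizes $A$ by row and column operations and affine substitutions of $x,y$ and then works through a case analysis): the translation-to-the-origin case, the injectivity of $P\mapsto[\lambda:\mu]$, and the inclusion of the solvable directions in the common zero set of the binary cubics are all sound.

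The gap is the sentence dismissing the degenerate case. You assert that if all $3\times3$ minors of $C(\lambda,\mu)$ vanish identically, then almost every direction is solvable and the exceptional set is infinite. But identical vanishing of those minors only produces, for each $[\lambda:\mu]$, a nonzero vector in $\ker C(\lambda,\mu)$; that vector may have last coordinate $z=0$, i.e.\ lie on the line at infinity, in which case the affine system $\lambda r_1+\mu r_2=0$ remains inconsistent. (Your earlier statement that solvability ``means'' rank at most $2$ is only an implication in one direction, and the degenerate case is exactly where the converse fails.) A concrete counterexample to the claimed implication is
$$
A(x,y)=\begin{bmatrix} y & 1 & 0\\ -x & 0 & 1\end{bmatrix},
\qquad
C(\lambda,\mu)=\begin{bmatrix} -\mu & \lambda & 0\\ 0&0&\lambda\\ 0&0&\mu\end{bmatrix}:
$$
the rows of $A(\alpha,\beta)$ are independent at every point, so the hypothesis of the lemma holds with $s=0$, yet $\det C\equiv 0$ and not a single direction is solvable (the kernel of $C$ is spanned by $(\lambda,\mu,0)$, always at infinity). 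So finiteness of the exceptional set does not rule out the degenerate case, and your proof of $s\le3$ is incomplete precisely at the point you yourself flagged as the main obstacle. Closing it requires analyzing the singular pencil $\lambda C_1+\mu C_2$: either its polynomial kernel vector has $z$-component not identically zero, and then your contradiction does go through, or the kernel lies in $\{z=0\}$, which forces the entries of $A$ into special shapes (for instance $a_{1j}=c_jy+d_j$, $a_{2j}=-c_jx+e_j$) whose $2\times2$ minors are affine-linear, giving $s\le1$. That residual analysis is of the same normalization-and-cases nature as the paper's own proof and cannot be waved away. A smaller omission: when $n=2$ the matrix $C$ has no $3\times3$ minors at all, so your scheme says nothing; there one must note separately (as the paper does) that $\det A$ is a bivariate polynomial of degree at most $2$, hence either a nonzero constant or a polynomial with infinitely many zeros, forcing $s=0$.
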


\begin{proof}
{\it Part 1: $s\le m^2$}. Clearly, $m\le n$.
The rows of $A(\alpha,\beta)$ are linearly
dependent if and only if $(\alpha,\beta)\in
k^2$ is a common root of all determinants
formed by columns of $A(x,y)$. The
determinants are polynomials in $x$ and $y$
of degree at most $m$; they are relatively
prime (otherwise, they have infinitely many
common roots $(\alpha,\beta)\in k^2$). The
inequality $s\le m^2$ follows from the
following statement:
\begin{equation}       \label{14.1}
\parbox{25em}
{If $h_1,\dots,h_t\in k[x,y]$ are
polynomials of degree at most $m$ and their
greatest common divisor $(h_1,\dots,h_t)$ is
1, then they have at most $m^2$ common
roots.}
\end{equation}
For $m=2$, this statement is a partial case
of the Bezout theorem \cite[Sect. 1.3]{gri}:
if $h_1,h_2\in k[x,y]$ and $(h_1,h_2)=1$,
then they have at most
$\deg(h_1)\cdot\deg(h_2)$ common roots.

Let $m\ge 3$. Applying induction in $t$, we
may assume that $d:=(h_1,\dots,h_{t-1})\ne
1$. If $(\alpha,\beta)$ is a common root of
$h_1,\dots,h_{t}$, then $(\alpha,\beta)$ is
a root of $h_t$ and also a root of $d$ or a
common root of
$g_1=h_1/d,\dots,g_{t-1}=h_{t-1}/d$. By the
Bezout theorem, the number of common roots
of $d$ and $h_t$ is at most $\deg(d)m$.  By
induction, the number of common roots of
$g_1,\dots,g_{t-1}$ is at most
$(m-\deg(d))^2$. Hence, the number of common
roots of $h_1,\dots,h_{t}$ is at most
$\deg(d)m+(m-\deg(d))^2\le \deg(d)m+
(m-\deg(d))m=m^2$. This proves \eqref{14.1}.
\medskip

{\it Part 2: $s\le 3$ if $m=2$}. Let $m=2$;
assume to the contrary that $s>3$. We will
reduce $A(x,y)$ by elementary
transformations over $k$ and by
substitutions $$
\begin{tabular}{l}
  $x_{\text{new}}=ax+by+c,$ \\
  $y_{\text{new}}=a_1x+b_1y+c_1,$
\end{tabular}\quad
\begin{vmatrix}
  a&b \\
  a_1&b_1
\end{vmatrix}\ne 0;
$$ the obtained matrices $A'(x,y)$ will have
the same number $s$, and their entries are
linear polynomials too. We suppose that each
of the matrices $A'(x,y)$ does not contain a
zero column; otherwise we can remove it and
take the obtained matrix instead of
$A(x,y)$.

Let $n=2$. The rows of $A(\alpha,\beta)$ are
linearly independent only if $\det
A(\alpha,\beta)\ne 0$. Under the conditions
of the lemma, the rows of $A(\alpha,\beta)$
are linearly independent for almost all
$(\alpha,\beta)\in k^2$, and so $\det
A(x,y)$ is a nonzero scalar and the rows of
$A(\alpha,\beta)$ are linearly independent
for all $(\alpha,\beta)\in k^2$.

Hence, $n\ge 3$. By elementary
transformations of rows of $A(x,y)$, we make
$a_{11}(x,y)=a_{11}\in \{0,1\}$.

If $a_{21}(x,y)=a_{21}\in k$, we make
$(a_{11},a_{21})=(1,0)$ by elementary
transformations of rows. The rows of
$A(\alpha,\beta)$ are linearly dependent
only if $$ a_{22}(\alpha,\beta)=
a_{23}(\alpha,\beta)= \dots =
a_{2n}(\alpha,\beta)=0. $$ Since
$a_{22}(x,y), a_{23}(x,y), \ldots$ are
linear polynomial, $s\le 1$.

Hence $a_{21}(x,y)\notin k$. We make
$a_{21}(x,y)=x$ by the substitution $$
 x_{\text{new}}= a_{21}(x,y),\quad
 y_{\text{new}}=
 \begin{cases}
 y & \text{if $a_{21}(x,y)\notin k[y]$},\\
 x & \text{otherwise}.
 \end{cases}
$$

If there exist distinct $l,r>1$ such that
\begin{equation}  \label{15.0}
\begin{tabular}{l}
  $a_{1l}(x,y)=ax+by+c,$ \\
  $a_{1r}(x,y)=a_1x+b_1y+c_1,$
\end{tabular}\quad
\begin{vmatrix}
  a&b \\
  a_1&b_1
\end{vmatrix}\ne 0,
\end{equation}
then we make $a_{12}(x,y)=x+a$ by elementary
transformations of columns except for the
first column. The rows of $A(\alpha,\beta)$
are linearly dependent if and only if
$(\alpha,\beta)$ is a solution of the system
\begin{equation}  \label{15.1}
\begin{vmatrix}
  a_{11}(x,y)&a_{1j}(x,y) \\
  a_{21}(x,y)&a_{2j}(x,y)
\end{vmatrix}=\begin{vmatrix}
  a_{11}&a_{1j}(x,y) \\
  x&a_{2j}(x,y)
\end{vmatrix}= 0,
\quad j=2,\dots,m.
\end{equation}
The first equation has the form
\begin{equation}  \label{15.2}
\begin{vmatrix}
  a_{11}&x+a \\
  x&bx+cy+d
  \end{vmatrix}= 0.
\end{equation}

Let $a_{11}c\ne 0$. We present \eqref{15.2}
in the form $y=a_1x^2+b_1x+c_1$, substitute
it into the other equations of the system
\eqref{15.1}, and obtain a system of
polynomial equations in $x$ of degree at
most 3. This system has at most three
solutions, and so $s\le 3$.

Let $a_{11}c= 0$. Since \eqref{15.2} is a
quadratic equation in x, $x=\alpha_1$ or
$x=\alpha_2$ for certain
$\alpha_1,\alpha_2\in k$. Substituting
$x=\alpha_i$ into the other equations of the
system \eqref{15.1} gives a system of linear
equations with respect to $y$, which has at
most one solution, and so $s\le 2$.

Hence, \eqref{15.0} does not hold for all
$l,r>1$. If there exists $j>1$ such that
$a_{1j}(x,y)=bx+a,\ b\ne 0$, then we make
$b=1$ and reason as in the previous case.
The case $a_{1j}(x,y)=a_j\in k$ for all
$j>1$ is trivial. Let us consider the
remaining case $a_{1j}(x,y)=ax+by+c,\ b\ne
0$, for a certain $j>1$. We make $$
A(x,y)=\begin{bmatrix}
  a_{11}&y&0&\dots&0 \\
  x&a_{22}(x,y)&a_{23}(x,y)&\dots
  &a_{2n}(x,y)
\end{bmatrix}
$$ by the substitution
$y_{\text{new}}=ax+by+c$ and by elementary
transformations of columns starting with the
second. If $a_{11}=0$, then the rows of
$A(\alpha,0)$ are linearly dependent for all
$\alpha\in k$. Hence $a_{11}=1$.

If the system $$ a_{2j}(x,y)=0,\quad
j=3,\dots,n, $$ has at most one solution,
then $s\le 1$. So this system is equivalent
to one equation of the form $y=ax+b$ or
$x=a$. Substituting it into
$$
\begin{vmatrix}
  a_{11}&y\\
  x&a_{22}(x,y)
\end{vmatrix}=0,
$$
we obtain a quadratic equation with respect
to $x$ or $y$. Hence $s\le 2$, a
contradiction.
\end{proof}

Let a linear matrix problem of tame type be
given by a pair $(\varGamma,\cal M)$ and let
$M\in {\cal
M}_{\underline{n}\times\underline{n}}$. We
sequentially reduce $M$ to the canonical
parametric form. If a block is reduced to a
Weyr matrix, we replace its diagonal entries
by parameters; but as soon as it becomes
clear from the form of subsequent boxes in
the process of reduction that a parameter
may possess only a finite number of values,
we replace it by these values.

The matrix that is obtained after reduction
of the first $r$ boxes will be called an
$r$-{\it matrix}; its partition into strips
(which refines the
${\underline{n}\times\underline{n}}$
partition) will be called the $r$-{\it
partition}, its strips and blocks will be
called $r$-{\it strips} and $r$-{\it
blocks}. Two $r$-matrices are {\it
equivalent} if their reduced boxes coincide.

Let $M$ be an $r$-matrix. Denote by $\bar M$
the matrix obtained from it by replacement
of all unreduced free entries with zeros.
Since the matrix problem is of tame type,
$\bar M$ is canonical for all values of
parameters, and it is reduced by
simultaneous permutations of horizontal and
vertical $r$-strips to the form
\begin{equation}   \label{14}
\bar{M}^{\vee}=
N_1(\lambda_1I)\oplus\dots\oplus
N_p(\lambda_pI) \oplus (R_1\otimes
I)\oplus\dots\oplus (R_q\otimes I),
\end{equation}
where $N_i(\lambda_iI)$ and $R_j\otimes I$
are indecomposable canonical one- and
zero-parameter canonical matrices
($R_j\otimes I$ is obtained from $R_j$ by
replacement of all its entries $a$ with
$aI$).

By the same permutation of $r$-strips, we
reduce $M$ to $M^{\vee}$ and break up it
into $(p+q)\times (p+q)$ strips conformally
to \eqref{14}. The obtained strips and
blocks will be called the {\it big strips}
and {\it big blocks} of $M^{\vee}$. (In the
terminology of \cite{ser}, the $r$-strips of
$M$ that are contained in the same big strip
are {\it linked}.)

Define the {\it weight}
$$t_M=3^{w(M)}$$
of an $r$-matrix $M$, where ${w(M)}$ is the
number of entries in all free boxes $M_i$,
$i\le r$, with the following property: $M_i$
disposes in the same big strip with a free
box $M_L$, $L<i$, containing a parameter
(that is, $M_i$ is linked with a box having
a parameter and reduces after it). Denote by
$s(M)$ the number of free entries in the
first unreduced $r$-block of $M$.

We say that an $(r+1)$-canonical matrix $B$
is an {\it extension} of an $r$-canonical
matrix $M$ and write $B\supset M$ if the
boxes $B_{1}, B_{2},\dots, B_r$ coincide
with the boxes $M_{1}, M_{2},\dots, M_r$ or
are obtained from them by replacement of
some of their parameters by scalars.

The proof of Theorem \ref{t1} bases on the
following lemma.

\begin{lemma} \label{l2}
Let $M$ be an $r$-matrix having unreduced
entries. Then the number of its
nonequivalent extensions $B\supset M$ taken
$t_B/t_M$ times is at most $4^{s(M)}$:
\begin{equation} \label{16}
\sum_{\text{nonequiv.\,}B\supset M} t_B/t_M
\le 4^{s(M)}.
\end{equation}
\end{lemma}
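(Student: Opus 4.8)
The plan is to examine a single step of Belitski\u\i's algorithm --- the reduction of the first unreduced free block $M_{pq}$, with $s=s(M)$ free entries --- and to bound the weighted count of its nonequivalent extensions. Two structural facts would anchor everything. First, each reduction step only \emph{refines} the partition into big strips, so a link between two already reduced boxes can only be destroyed, never created; hence the single new box $B_{r+1}$ is the only possible source of growth of $w$, and every individual extension obeys $t_B/t_M\le 3^{\,s}$. Second, the form $\bar M^{\vee}$ of \eqref{14} records whether $B_{r+1}$ meets a \emph{parameter} big strip (a summand $N_i(\lambda_iI)$) or only \emph{parameter-free} ones (summands $R_j\otimes I$). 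I would first dispose of the trivial situations: if $M_{pq}$ is not free then $s=0$, the block is unchanged, there is one extension with $t_B=t_M$, and the left side of \eqref{16} equals $1=4^{0}$; if $M_{pq}$ is killed by an admissible addition (case (i)), there is again a unique extension with $B_{r+1}=0$, and $t_B/t_M\le 3^{\,s}\le 4^{\,s}$.

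Next I would treat the \textbf{parameter-free case}, where $B_{r+1}$ neither carries nor is linked to any parameter. Then $w$ cannot increase, so $t_B/t_M\le 1$ for each extension, and the nonequivalent extensions are distinguished only by the rank of the reduced block \eqref{5.2}; there are at most $\min(a,b)+1\le s+1$ of them. The elementary inequality $n+1\le 4^{\,n}$ (valid for $n\ge 1$) then yields $\sum_B t_B/t_M\le s+1\le 4^{\,s}$, which settles this case.

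The genuine difficulty is the \textbf{parameter case}, where $B_{r+1}$ lies in, or couples two, parameter big strips. Here one must \emph{not} multiply the weight $3^{\,s}$ by a free rank count, because already $(s+1)3^{\,s}>4^{\,s}$ for small $s$. The point to exploit is that in a parameter strip the reduced form of $B_{r+1}$ is \emph{generically rigid}: writing the relevant entries as linear forms in the interacting parameters $x=\lambda_i$ and $y=\lambda_j$, the rows that govern the reduction are linearly independent for all parameter values except the finitely many exceptional ones furnished by Lemma \ref{l1}. Thus there is a single generic extension, for which $t_B/t_M\le 3^{\,s}$, together with one degenerate extension for each exceptional value; Lemma \ref{l1} bounds the number of the latter by $m^{2}$, and by $3$ when $m=2$. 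At each exceptional value a parameter is specialized to a scalar, which consumes that parameter and severs the links of every box attached to it, so each degenerate extension has \emph{strictly smaller} weight than the generic one. (The sub-case in which $B_{r+1}$ is itself reduced to a fresh Weyr matrix \eqref{5.3} is handled the same way, the new eigenvalues playing the role of the parameters.)

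Finally I would assemble the estimate as
\[
\sum_{\text{nonequiv.}\,B\supset M}\frac{t_B}{t_M}\;\le\;3^{\,s}\;+\;\sum_{\text{degenerate}}\frac{t_B}{t_M},
\]
and show that the degenerate terms fit inside the slack $4^{\,s}-3^{\,s}=3^{\,s}\!\left((4/3)^{\,s}-1\right)$. This balancing is the main obstacle: one has to verify that the number of degenerate extensions (bounded by Lemma \ref{l1}) times their reduced weights stays below the slack. It is precisely here that the sharp count $s\le 3$ for $m=2$ in Lemma \ref{l1} --- rather than the weaker value $m^{2}=4$ --- is indispensable, since for the smallest blocks the gap between three and four admissible specializations is exactly what separates the valid bound $4^{\,s}$ from a failing one. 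The remaining steps are routine: tallying the free entries that survive in each degenerate box and checking that each specialization lowers $w$ by at least the amount the accounting demands.
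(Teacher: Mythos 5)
Your outline reproduces the skeleton of the paper's own proof: the case split according to whether the big block $M^{\vee}_{xy}$ containing the new block meets zero, one, or two parameter strips of \eqref{14}; the generic extension with $B_{r+1}=0$, retained parameter, and weight ratio exactly $3^{z_1z_2}=3^{s}$; finitely many exceptional parameter values controlled by Lemma~\ref{l1}; and the correct observation that the sharp bound $s\le 3$ for $m=2$ (rather than $m^2=4$) is indispensable. But the two steps you defer as ``routine'' are precisely the content of the lemma, and as stated your accounting does not close. First, you undercount the degenerate extensions: ``one degenerate extension for each exceptional value'' is wrong, since at an exceptional value $M_{r+1}$ is reduced by equivalence transformations and every rank gives a nonequivalent form \eqref{5.2}, i.e.\ up to $\min\{z_1,z_2\}$ extensions per value; in the diagonal case $l=r$ the new box is a parametric Weyr matrix, contributing up to $3^{z-1}$ nonequivalent structures per value. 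Second, ``strictly smaller weight'' is far too weak to fit the degenerate terms inside the slack $4^{s}-3^{s}$: for $s=z_1z_2=1$ and $m=2$ the slack is exactly $1$, and with three exceptional values one needs $t_B/t_M\le 3^{-(m-1)}$ on the nose. The paper obtains this from the inequality $m-1\le h$, where $h$ is the number of entries in the free boxes of type (c) (those reduced after the parameter box $M_L$ and lying in the same big strip): each row of the system \eqref{25}--\eqref{25'} with parameter-dependent coefficients arises from such a box, and specializing $\lambda_l$ (and $\lambda_r$) removes all $h$ of these entries from the count $w$. You never establish this link between the row count $m$ of the matrix $A(\lambda_l,\lambda_r)$ and the weight bookkeeping, and it is exactly the chain $s\cdot 3^{-m+1}\le 1$ (via Lemma~\ref{l1}) together with $3^{z_1z_2}+\min\{z_1,z_2\}\le 4^{z_1z_2}$ that proves \eqref{16}; without the quantified drop, the sum of degenerate contributions cannot be bounded at all.

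A secondary inaccuracy: your first ``anchor,'' that a link between reduced boxes ``can only be destroyed, never created,'' is false. A nonzero $B_{r+1}$ coupling two distinct big strips merges the corresponding direct summands in \eqref{14}, thereby creating links; this is harmless only because the merger is forced to occur at a specialized parameter value, so the merged strip contains no parameter box and the newly linked entries do not enter $w$ --- but that is an argument one must make, not a monotonicity that holds a priori. (Your bound $t_B/t_M\le 3^{s}$ for every individual extension is nevertheless correct, since only the $s$ entries of the new box can newly satisfy the defining condition of $w$.) With these repairs --- the per-value multiplicities $\min\{z_1,z_2\}$ and $3^{z-1}$, the derivation of the linear system \eqref{25a}--\eqref{25'a} from conditions (a), (b), (c), and the drop $m-1\le h$ --- your plan becomes the paper's proof; as written, it is an outline whose main obstacle, which you yourself flag, is left unresolved.
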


\begin{proof}
Let $M_{r+1}$ be the first unreduced
$r$-block of $M$ and let $M^{\vee}_{xy}$ be
the big block containing $M_{r+1}$. The
following three cases are possible.
\medskip

 \noindent {\it Case 1:
$x>p$ and $y>p$} (see \eqref{14}). Then the
horizontal and the vertical big strips of
$M^{\vee}_{xy}$ do not contain parameters,
and $t_B=t_M$ for all $B\supset M$.

(i) Let there exist a nonzero addition to
$M_{r+1}$. We make $M_{r+1}=0$, then all
$B\supset M$ are equivalent and the
inequality \eqref{16} takes the form $1\le
4^{s(M)}$.

(ii) Let there exist no nonzero addition to
$M_{r+1}$ and $M_{r+1}$ is reduced by
elementary transformations. Then each
$B\supset M$ has $B_{r+1}$ of the form
\eqref{5.2}, the number of such $z_1\times
z_2$ matrices $B_{r+1}$ is
$\min\{z_1,z_2\}+1$. The inequality
\eqref{16} takes the form $\min\{z_1,z_2\}+1
\le 4^{z_1z_2}$.

(iii) Let there exist no nonzero addition to
$M_{r+1}$ and $M_{r+1}$ is reduced by
similarity transformations. Then the box
$B_{r+1}$ of each $B\supset M$ is a
parametric Weyr matrix. The number of
parametric $z\times z$ Weyr matrices is
bounded by $3^{z-1}$ since the structure of
a matrix $W$ of the form \eqref{5.3} is
determined by the sequence
$(n_2,\dots,n_z)\in\{1,2,3\}^{z-1}$, where
$n_l=1$ if the $(l,l)$ entry of $W$ is the
first entry of $W_{\alpha_i}$, $n_l=2$ if
the $(l,l)$ entry is not the first entry of
$W_{\alpha_i}$ but the first entry of
$\alpha_iI_{m_ij}$ (see \eqref{5.4}), and
$n_l=3$ if the $(l,l)$ entry is not the
first entry of $\alpha_iI_{m_ij}$. Hence,
the number of nonequivalent extensions $B$
of $M$ is bounded by $3^{z-1}$. This proves
\eqref{16} since $t_B=t_M$ and $s(M)=z^2$.
\medskip

 \noindent {\it Case 2:
$x\le p<y$ or $y\le p<x$}. Then a horizontal
or vertical big strip of $M^{\vee}_{xy}$
contains a parameter $\lambda_l,\
l\in\{1,\dots,p\}$.

Let the parameters of $M$ take on values
from the domain of parameters. There exists
no nonzero addition to $M_{r+1}$ if and only
if
\begin{equation} \label{21}
M'=SMS^{-1}
\end{equation}
implies $M'_{r+1}=M_{r+1}$ for all
$r$-matrices $M'$ that are equivalent to $M$
and all $S\in \varGamma_{\underline{n}
\times\underline{n}}$ whose main diagonal
with respect to $r$-partition consists of
the identity $r$-blocks.\footnote{In
\cite[Theorem 1.4(b)]{ser}, the condition
``but $M_q'\ne M_q$'' must be replaced with
``and $M_q'=0$''.}

Let us partition $S$ and $M$ into
$r$-blocks:
$S=[S_{\alpha\beta}]_{\alpha,\beta=1}^e$ and
$M=[M_{\alpha\beta}]_{\alpha,\beta=1}^e$.
Since $M_{r+1}$ is an $r$-block,
$M_{r+1}=M_{\zeta\eta}$ for certain $\zeta$
and $\eta$. Presenting \eqref{21} in the
form $M'S=SM$ and equating the $(\zeta
,\eta)$ $r$-blocks, we obtain
\begin{equation}       \label{21'}
M_{\zeta 1}'S_{1\eta}+\dots+ M_{\zeta
,\eta-1}'S_{\eta-1,\eta}+ M_{\zeta \eta}' =
M_{\zeta\eta}+ S_{\zeta,
\zeta+1}M_{\zeta+1,\eta}+\dots +S_{\zeta
e}M_{e\eta}
\end{equation}
since $S$ is upper triangular with identity
diagonal $r$-blocks.

The blocks $M_{\zeta 1}',\dots, M_{\zeta
,\eta-1}'$ precede $M_{\zeta \eta}'$ so they
have been reduced and $M_{\zeta 1}'=M_{\zeta
1}, \dots,M_{\zeta ,\eta-1}'=M_{\zeta
,\eta-1}$. Moreover, each of them is nonzero
only when it is contained in the big block
$M_{xx}^{\vee}$ (they are contained in the
$x$ big horizontal strip of $M^{\vee}$ since
$M_{\zeta \eta}$ is contained in
$M_{xy}^{\vee}$, but $M^{\vee}$ is
big-block-diagonal, see \eqref{14}).
Analogously, each of $M_{\zeta+1,\eta},\dots
,M_{e\eta}$ is nonzero only when it is
contained in $M_{yy}^{\vee}$. Hence, each
$r$-block $S_{\alpha\beta}$ in \eqref{21'}
may have a nonzero factor only when it is
contained in $S_{xy}^{\vee}$. This factor
has the form $(a\lambda_l+b)I$, $a,b\in k$,
since all reduced free $r$-blocks from
$M_{xx}^{\vee}$ and $M_{yy}^{\vee}$ are zero
matrices, scalar matrices, and $\lambda_lI$.

Therefore, there exists no nonzero addition
to $M_{r+1}$ for $\lambda_l=a\in k$ if and
only if the following property holds for
each $S\in \varGamma_{\underline{n}\times
\underline{n}}$ whose main diagonal with
respect to $r$-partition consists of the
identity $r$-blocks: if the transformation
\eqref{21} given by $S$ preserves all boxes
preceding $M_{r+1}$, then
\begin{equation}    \label{23}
M_{\zeta 1}S_{1\eta}+\dots+ M_{\zeta
,\eta-1}S_{\eta-1,\eta}-
 S_{\zeta, \zeta+1}
 M_{\zeta+1,\eta}-\dots
 -S_{\zeta e}M_{e\eta}=0.
\end{equation}
The equality \eqref{23} is a linear
combination of $r$-blocks from
$S_{xy}^{\vee}$; its coefficients are linear
polynomials in $\lambda_l$.

The conditions on $r$-blocks of
$S_{xy}^{\vee}$ that ensure the preservation
of all boxes preceding $M_{r+1}$ can be
formulated in the form of a system of linear
homogeneous equations with respect to
$r$-blocks of $S$ that consists of:

(a) Linear equations with coefficients from
$k$ that give the algebra
$\varGamma_{\underline{n}
\times\underline{n}}$ as a vector space. We
restrict ourselves to those equations that
contain $r$-blocks from $S_{xy}^{\vee}$,
then they do not contain $r$-blocks outside
$S_{xy}^{\vee}$ (see \cite[p. 87]{ser}).

(b) Linear equations with coefficients from
$k$ that ensure the preservation of those
free $r$-blocks $M_{\alpha\beta}$ that are
contained in the intersection of
$M_{xy}^{\vee}$ with the boxes
$M_1,\dots,M_L$, where $M_L$ is the free box
containing the parameter $\lambda_l$. These
equations have the form \eqref{23} with the
indices $(\alpha,\beta)$ instead of $(\zeta
,\eta)$.

(c) Linear equations, whose coefficients are
linear polynomials in $\lambda_l$, that
ensure the preservation of free $r$-blocks
$M_{\alpha\beta}$ contained in the
intersection of $M_{xy}^{\vee}$ with the
boxes $M_{L+1},\dots,M_r$; the number of
entries in the boxes $M_{\alpha\beta}$ will
be denoted by $h$. They also have the form
\eqref{23} with $(\alpha,\beta)$ instead of
$(\zeta ,\eta)$.

Solving the system (a)$\cup$(b), we choose
$r$-blocks $S_1,\dots,S_n$ from
$S_{xy}^{\vee}$ such that they are arbitrary
and the other $r$-blocks from
$S_{xy}^{\vee}$ are their linear
combinations. Substituting the solution into
the system (c) and the equation \eqref{23},
we obtain a system of the form
\begin{equation}  \label{25}
\begin{matrix}
\qquad\ \ a_{11}(\lambda_l)S_1+
\dots+a_{1n}(\lambda_l)S_n=0
\\ \hdotsfor{1}\\
a_{m-1,1}(\lambda_l)S_1+
\dots+a_{m-1,n}(\lambda_l)S_n=0
\end{matrix}
\end{equation}
and, respectively, an equation
\begin{equation}  \label{25'}
a_{m1}(\lambda_l)S_1+
\dots+a_{mn}(\lambda_l)S_n=0,
\end{equation}
where $a_{ij}(\lambda_l)$ are linear
polynomials in $\lambda_l$. We take the
equations \eqref{25}--\eqref{25'} such that
the $m\times n$ matrix $A(\lambda_l)=
[a_{ij}(\lambda_l)]$ has linearly
independent rows for almost all values of
$\lambda_l$; it is possible by \cite[Sect.
3.3.2]{ser} since the matrix problem is of
tame type. Then $m\le n$.

Let there exist no nonzero addition to
$M_{r+1}$ for $\lambda_l=\alpha\in k$. Then
the equation \eqref{25'} follows from the
system \eqref{25}. Therefore, all
determinants formed by columns of the matrix
$A(\lambda_l)$ become zero for
$\lambda_l=\alpha$. These determinants are
polynomials in $\lambda_l$ of degree at most
$m$. If all the polynomials are identically
equal to 0, then the rows of $A(\lambda_l)$
are linearly dependent for all values of
$\lambda_l$ and the problem is of wild type.
Therefore, they have at most $m$ common
roots, and hence there are at most $m$
values $\alpha\in k$ of $\lambda_l$ for
which we cannot make $M_{r+1}=0$.

Let $\lambda_l$ be equal to one of these
values. The matrix $M_{r+1}$ is transformed
by equivalence transformations since
$M_{r+1}$ is not contained in a diagonal big
block. Hence each extension $B\supset M$ has
$B_{r+1}$ in the form \eqref{5.2}; the
number of nonequivalent extensions $B$ with
nonzero $B_{r+1}$ and the same value of
$\lambda_l$ is $\min\{z_1,z_2\}$, where
$z_1\times z_2$ is the size of $M_{r+1}$;
their weight $t_B\le t_M/3^{m-1}$ (since
$\lambda_l$ no longer is a parameter and
$m-1\le h$, where $h$ is defined in
paragraph (c)).

There is also one (up to equivalence)
extension $B\supset M$ with $B_{r+1}=0$ and
the parameter $\lambda_l$. Its weight
$t_B=t_M\cdot 3^{z_1z_2}$.

We have $$ \sum_{\text{nonequiv.\,}B\supset
M} t_B/t_M \le
3^{z_1z_2}+m\cdot\min\{z_1,z_2\}\cdot
3^{-m+1}\le 4^{z_1z_2}=4^{s(M)} $$ since
$m\cdot 3^{-m+1}\le 1$ and
$3^{z_1z_2}+\min\{z_1,z_2\}\le 4^{z_1z_2}$
for all natural numbers $m$, $z_1$ and
$z_2$. This proves \eqref{16}.
\medskip

 \noindent {\it Case 3:
$x\le p$ and $y\le p$}. Then the horizontal
and vertical big strips of $M^{\vee}_{xy}$
contain parameters $\lambda_l$ and
$\lambda_r$ from free boxes $M_L$ and $M_R$,
respectively. We will assume $L\le R$.

Let $l=r$. Then $M_L=M_R$ is a Weyr matrix,
$\lambda_l=\lambda_r$ is the parameter of
its block \eqref{5.4}, and $x=y$. This case
is similar to Case 2, but the matrix
$M_{r+1}$ is reduced by similarity
transformations since $M_{r+1}$ is contained
in the diagonal big block $M^{\vee}_{xx}$.
In each extension $B\supset M$, the box
$B_{r+1}$ is a Weyr matrix. The number of
parametric $z\times z$ Weyr matrices is
bounded by $3^{z-1}$ (see Case 1(iii)), so
we have $$
 \sum_{\text{nonequiv.\,}B\supset
M} t_B/t_M \le 3^{z^2}+m\cdot 3^{z-1}\cdot
3^{-m+1}\le 4^{z^2}=4^{s(M)} $$ since
$m\cdot 3^{-m+1}\le 1$ and
$3^{z^2}+3^{z-1}\le 4^{z^2}$ for all natural
numbers $m$ and $z$.

Let $l\ne r$. Then $x\ne y$; in distinction
to Case 2, the system (c) consists of linear
equations whose coefficients are linear
polynomials in $\lambda_l$ and $\lambda_r$.
Correspondingly, the system \eqref{25} and
the equation \eqref{25'} take the form
\begin{equation}  \label{25a}
\begin{matrix}
\qquad\ \ a_{11}(\lambda_l,\lambda_r)S_1+
\dots+a_{1n}(\lambda_l,\lambda_r)S_n=0
\\ \hdotsfor{1}\\
a_{m-1,1}(\lambda_l,\lambda_r)S_1+
\dots+a_{m-1,n}(\lambda_l,\lambda_r)S_n=0
\end{matrix}
\end{equation}
and
\begin{equation}  \label{25'a}
a_{m1}(\lambda_l,\lambda_r)S_1+
\dots+a_{mn}(\lambda_l,\lambda_r)S_n=0,
\end{equation}
respectively, where
$a_{ij}(\lambda_l,\lambda_r)$ are linear
polynomials in $\lambda_l$ and $\lambda_r$.

Let there exist no nonzero addition to
$M_{r+1}$ for $(\lambda_l,\lambda_r)
=(\alpha,\beta)\in k^2$. Then the equation
\eqref{25'a} follows from the system
\eqref{25a} and hence the matrix
$A(\alpha,\beta)$ (see \eqref{14.0}) has
linearly dependent rows. The set of values
of $(\lambda_l,\lambda_r)$ for which the
rows of $A(\lambda_l,\lambda_r)$ are
linearly dependent is finite (otherwise the
matrix problem is of wild type, see
\cite[Sect. 3.3.1]{ser}); assume that this
set consists of pairs
$
(\alpha_1,\beta_1),\
(\alpha_2,\beta_2),\dots,
(\alpha_s,\beta_s)\in k^2.
$

By analogy with Case 2, there are at most
$s\cdot\min\{z_1,z_2\}$ nonequivalent
extensions $B\supset M$ with nonzero
$B_{r+1}$ of size $z_1\times z_2$, their
weight $t_B\le t_M/3^{m-1}$ (since
$\lambda_l$ and $\lambda_r$ no longer are
parameters). There is also one extension
$B\supset M$ with $B_{r+1}=0$ and the
parameters $\lambda_l$ and $\lambda_r$; its
weight $t_B=t_M\cdot 3^{z_1z_2}$. We have $$
\sum_{\text{nonequiv.\,}B\supset M} t_B/t_M
\le 3^{z_1z_2}+s\cdot\min\{z_1,z_2\}\cdot
3^{-m+1}\le 4^{z_1z_2}=4^{s(M)} $$ since
$s\cdot 3^{-m+1}\le 1$ by Lemma \ref{l1} and
$3^{z_1z_2}+\min\{z_1,z_2\}\le 4^{z_1z_2}$
for all natural numbers $m$, $z_1$ and
$z_2$. This proves \eqref{16}.
\end{proof}

\begin{proof}[Proof of Theorem \ref{t1}]
Let $M$ be an $r$-matrix of size
$\underline{n} \times\underline{n}$. We will
write $M\Subset C$ if $C$ is a canonical
parametric matrix whose boxes $C_{1},
C_{2},\dots, C_r$ coincide with the boxes
$M_{1}, M_{2},\dots, M_r$ or are obtained
from them by replacement of some of their
parameters by scalars. We may add
sequentially the boxes of $C$ to the boxes
of $M$ and obtain a sequence of extensions
\begin{equation}\label{26}
  M\subset B_1 \subset B_2 \subset
  \dots \subset B_{l-1}\subset B_l=C,
\end{equation}
where $B_i$ is an $(r+i)$-matrix and $l+r$
is the number of boxes of $C$. The length
$l$ of this sequence may be changed if we
change $C$; the greatest length $l$ will be
called the {\it dept} of $M$ and will be
denoted by $l(M)$.

We prove by induction in $l(M)$ that
\begin{equation} \label{27}
\sum_{C\Supset M} t_C/t_M \le
4^{\bar{s}(M)},
\end{equation}
where $\bar{s}(M)$ is the number of
unreduced free entries in $M$.

If $l(M)=1$, this inequality follows from
Lemma \ref{l2}. Let $l(M)\ge 2$ and
\eqref{27} holds for all $r'$-matrices whose
dept is less than $l(M)$. Then
\begin{alignat*}{2}
\sum_{C\Supset M} t_C/t_M
&=\sum_{\text{nonequiv.\,}B\supset M}
\sum_{C\Supset B} t_C/t_B \cdot t_B/t_M &&\\
&= \sum_{\text{nonequiv.\,}B\supset M}
t_B/t_M \sum_{C\Supset B} t_C/t_B &&\\
&\le \sum_{\text{nonequiv.\,}B\supset M}
t_B/t_M \cdot 4^{\bar{s}(B)} &&
 \quad \text{by the induction
 hypothesis}\\
&=4^{\bar{s}(M)-s(M)}
\sum_{\text{nonequiv.\,}B\supset M} t_B/t_M
&&\\
&=4^{\bar{s}(M)-s(M)}\cdot 4^{s(M)} &&
 \quad \text{by Lemma \ref{l2}}\\
&=4^{\bar{s}(M)}; &&
 \end{alignat*}
that proves \eqref{27}. The substitution of
the 0-canonical matrix $0$ for $M$ in
\eqref{27} gives
$$ \sum_{C\Supset 0} t_C \le
4^{s(\underline{n})}.
$$
This proves Theorem \ref{t1} since the sum
is taken over all canonical parametric
matrices and $t_C\ge 1$ by the definition of
weight.
\end{proof}

Now we extend Theorem \ref{t1} to matrix
problems, in which row- and
column-transformations are separated.

Let $\varGamma\subset k^{t\times t}$ and
$\Delta\subset k^{l\times l}$ be two basic
matrix algebras and let ${\cal N} \subset
k^{t\times l}$ be a vector space such that
$$
\varGamma{\cal N} \subset {\cal N}\quad
\text{and} \quad {\cal N} \Delta \subset
{\cal N}.
$$
By a {\it separated matrix problem given by}
$(\varGamma, \Delta,{\cal N})$, we mean the
canonical form problem for matrices $N\in
{\cal N}_{\underline{m}\times\underline{n}}$
in which the row transformations are given
by $\varGamma$ and the column
transformations are given by $\Delta $:
$$
N\longmapsto CNS,\quad C\in
\varGamma_{\underline{m}\times
\underline{m}}^*,\ S\in \Delta
_{\underline{n}\times\underline{n}}^*.
$$
Following \cite[Lemma 2.3]{ser}, we may
consider this matrix problem as the linear
matrix problem given by the pair
$(\varGamma\times \Delta ,\ 0\diagdown {\cal
N})$ (see \eqref{3.4aa}), where $0\diagdown
{\cal N}$ denotes the vector space of
${(t+l)\times (t+l)}$ matrices of the form
$$
\begin{bmatrix}
  0 & X\\ 0&0
\end{bmatrix},\qquad X\in{\cal N}.
$$
This permits to extend Theorem \ref{t1} to
separated matrix problems.

\begin{theorem}   \label{t1'}
If a separated matrix problem is of tame
type, then the number of its canonical
parametric matrices of size
$\underline{m}\times\underline{n}$ is
bounded by $4^{s(\underline{m},
\underline{n})}$, where $s(\underline{m},
\underline{n})$ is the number of free
entries in an
$\underline{m}\times\underline{n}$ matrix.
\end{theorem}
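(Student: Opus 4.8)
The plan is to deduce Theorem~\ref{t1'} from Theorem~\ref{t1} by realizing the separated matrix problem as a genuine linear matrix problem, exactly through the pair $(\varGamma\times\Delta,\,0\diagdown{\cal N})$ set up before the statement. First I would check that this pair really defines a linear matrix problem in the sense of \eqref{3.4aa}. The block-diagonal algebra $\varGamma\times\Delta\subset k^{(t+l)\times(t+l)}$ consists of the matrices $\diag(P,Q)$ with $P\in\varGamma$ and $Q\in\Delta$; it is upper triangular, and since each of $\varGamma,\Delta$ satisfies the diagonal-closure condition \eqref{3.00}, so does $\varGamma\times\Delta$, whence it is a basic matrix algebra. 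The two module conditions $(\varGamma\times\Delta)(0\diagdown{\cal N})\subset 0\diagdown{\cal N}$ and $(0\diagdown{\cal N})(\varGamma\times\Delta)\subset 0\diagdown{\cal N}$ collapse, after the obvious block multiplication, to $\varGamma{\cal N}\subset{\cal N}$ and ${\cal N}\Delta\subset{\cal N}$, which are assumed.

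Next I would match the two problems at the level of transformations. The equivalence relation $\sim$ on $\{1,\dots,t+l\}$ attached to $\varGamma\times\Delta$ never relates an index $\le t$ with an index $>t$, so a step-sequence for $\varGamma\times\Delta$ is exactly a pair $(\underline{m},\underline{n})$ made of a step-sequence $\underline{m}$ for $\varGamma$ and a step-sequence $\underline{n}$ for $\Delta$. A matrix in $(0\diagdown{\cal N})_{(\underline{m},\underline{n})\times(\underline{m},\underline{n})}$ has the shape $\left[\begin{smallmatrix}0&N\\0&0\end{smallmatrix}\right]$ with $N\in{\cal N}_{\underline{m}\times\underline{n}}$, and for $S=\diag(P,Q)\in(\varGamma\times\Delta)^*$ a direct computation gives
$$
S^{-1}\begin{bmatrix}0&N\\0&0\end{bmatrix}S=\begin{bmatrix}0&P^{-1}NQ\\0&0\end{bmatrix}.
$$
As $P$ runs over $\varGamma^*$ and $Q$ over $\Delta^*$, the induced map $N\mapsto P^{-1}NQ$ runs over precisely the separated row/column transformations $N\mapsto CNS$ of Theorem~\ref{t1'}. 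Thus $N\mapsto\left[\begin{smallmatrix}0&N\\0&0\end{smallmatrix}\right]$ identifies the two classification problems.

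Finally I would transport the remaining data through this identification. The forced zero blocks carry no free entries, so the number of free entries of $\left[\begin{smallmatrix}0&N\\0&0\end{smallmatrix}\right]$ equals the number $s(\underline{m},\underline{n})$ of free entries of $N$; Belitski\u\i's algorithm applied to $\left[\begin{smallmatrix}0&N\\0&0\end{smallmatrix}\right]$ only ever touches the $(1,2)$-block, so the canonical parametric matrices of $(\varGamma\times\Delta,\,0\diagdown{\cal N})$ correspond bijectively to those of $(\varGamma,\Delta,{\cal N})$; and since the two problems are identified, one is of tame type if and only if the other is. Applying Theorem~\ref{t1} to $(\varGamma\times\Delta,\,0\diagdown{\cal N})$ then bounds the number of canonical parametric matrices of size $(\underline{m},\underline{n})\times(\underline{m},\underline{n})$ by $4^{s(\underline{m},\underline{n})}$, which is the assertion. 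There is no genuinely hard step here: the only point needing care is the bookkeeping of the previous two paragraphs---confirming that block-diagonal conjugation restricted to matrices supported in the $(1,2)$-block reproduces the separated action faithfully, and that the partitions into strips and free blocks and the ordering \eqref{4.1} correspond under the embedding. This is routine and is already carried out in \cite[Lemma 2.3]{ser}.
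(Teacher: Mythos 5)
Your proposal is correct and takes essentially the same route as the paper: the paper also realizes the separated problem as the linear matrix problem given by the pair $(\varGamma\times \Delta,\ 0\diagdown {\cal N})$, citing \cite[Lemma 2.3]{ser}, and then invokes Theorem~\ref{t1}. You have simply written out the bookkeeping (basicness of $\varGamma\times\Delta$, the conjugation computation $N\mapsto P^{-1}NQ$, and the matching of free entries and canonical forms) that the paper leaves implicit.
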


\section{Number of modules}
\label{s4}

The problem of classifying modules over
finite dimensional algebra $A$ reduces to a
linear matrix problem; its canonical
matrices determine a full system of
nonisomorphic modules over $A$ (see
\cite[Sect. 2.5]{ser}), which will be called
{\it canonical}. If $A$ is of tame type,
then the set of canonical right modules of a
fixed dimension partitions into a finite
number of series that are determined by
canonical parametric matrices of the form
\eqref{12}. In this section, we prove the
following estimate.

\begin{theorem} \label{t2}
If $A$ is an algebra of tame type and
$f(d,A)$ is the number of series of
canonical right $A$-modules of dimension at
most $d$, then
\begin{equation}\label{3.1}
f(d,A)\le {\binom {d+r} r}
4^{d^2(\delta_1^2+\dots +\delta_r^2)}\le
(d+1)^r4^{d^2(\dim A)^2},
\end{equation}
where $r$ is the number of nonisomorphic
indecomposable projective left $A$-modules,
and $\delta_1,\dots,\delta_r$ are their
dimensions.
\end{theorem}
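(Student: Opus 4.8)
The plan is to reduce Theorem \ref{t2} to the estimate already established for separated matrix problems (Theorem \ref{t1'}) by carefully tracking the sizes of the matrix problem that Drozd's reduction produces for $d$-dimensional modules, and then summing a uniform bound over all relevant ``dimension vectors''. First I would recall the setup from the introduction: a right $A$-module $M$ of dimension $\le d$ admits a projective presentation
$$
P_1^{p_1}\oplus\dots\oplus P_r^{p_r}
\stackrel{\varphi}{\longrightarrow}
P_1^{q_1}\oplus\dots\oplus P_r^{q_r}
\longrightarrow M\longrightarrow 0,
$$
and the classification of such $M$ becomes the separated matrix problem of classifying the matrix of $\varphi$ under the action of $\Aut(\bigoplus_iP_i^{p_i})$ on columns and $\Aut(\bigoplus_iP_i^{q_i})$ on rows. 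The key numerical observation is that, since the projectives $P_i$ have dimensions $\delta_i$, fixing the multiplicity vector $\underline{q}=(q_1,\dots,q_r)$ fixes the dimension $\sum_i q_i\delta_i$ of the middle term, and one may always take $q_i\le d$ (and similarly $p_i\le d$) because a $d$-dimensional module needs at most $d$ generators and at most $d$ relations. Thus the free entries of the associated separated matrix problem live in blocks indexed by pairs $(i,j)$, the $(i,j)$ block has size governed by $p_i$ and $q_j$ and by the dimensions of $\Hom$-spaces between the $P_i$, and the total number of free entries $s(\underline{m},\underline{n})$ is bounded by $d^2(\delta_1^2+\dots+\delta_r^2)$.

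With that bound in hand, the argument proceeds in two steps. First, for each fixed pair of multiplicity vectors $(\underline{p},\underline{q})$ with all entries $\le d$, Theorem \ref{t1'} gives that the number of canonical parametric matrices — hence the number of one-parameter series of modules arising from this $(\underline{p},\underline{q})$ — is at most $4^{s(\underline{m},\underline{n})}\le 4^{d^2(\delta_1^2+\dots+\delta_r^2)}$, a bound \emph{uniform} in $(\underline{p},\underline{q})$. Second, I would count the number of admissible vectors $\underline{q}$. Since the dimension of $M$ is at most $d$ and $M$ is a quotient of $\bigoplus_i P_i^{q_i}$, the relevant invariant is really the top $\bigoplus_i(P_i/\rad P_i)^{q_i}$, so the series are parametrised by the vector $\underline{q}=(q_1,\dots,q_r)$ of multiplicities of the simple tops, subject to $q_1+\dots+q_r\le d$. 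The number of such vectors is exactly the number of lattice points $(q_1,\dots,q_r)$ with $q_i\ge 0$ and $\sum q_i\le d$, which is $\binom{d+r}{r}$.

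Putting these together, the total number of series is bounded by the number of top-vectors times the uniform per-vector bound:
$$
f(d,A)\le \binom{d+r}{r}\,4^{d^2(\delta_1^2+\dots+\delta_r^2)}.
$$
The final inequality $\binom{d+r}{r}\le(d+1)^r$ is elementary (each of the $r$ factors $\frac{d+i}{i}$ is at most $d+1$), and $\delta_1^2+\dots+\delta_r^2\le(\delta_1+\dots+\delta_r)^2\le(\dim A)^2$ since the projectives sit inside $A$, giving the stated second form. I expect the main obstacle to be the \emph{bookkeeping} in the first step: showing rigorously that one may restrict to $q_i\le d$ and $p_i\le d$ and that the free entries of the separated problem count up to at most $d^2(\delta_1^2+\dots+\delta_r^2)$ rather than some larger quantity. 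This requires identifying the free blocks of the matrix of $\varphi$ with $\Hom(P_i,P_j)$-type data and bounding each $\dim\Hom(P_i,P_j)$ by $\delta_i\delta_j$ (so that summing $\sum_{i,j}p_ip_j\dim\Hom\le d^2\sum_j\delta_j^2$), together with the subtlety that a single series may a priori arise from several presentations and one must ensure no overcounting inflates the bound beyond the claimed $\binom{d+r}{r}$ factor.
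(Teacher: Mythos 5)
Your overall strategy---Drozd's reduction to a separated matrix problem, the uniform bound $4^{s}$ from Theorem \ref{t1'} for each fixed dimension vector, and the count $\binom{d+r}{r}$ of vectors $\underline q$ with $q_1+\dots+q_r\le d$---is the same as the paper's, and your handling of $\underline q$ (via the top of $M$, i.e.\ $\sum_i q_i=\dim Q/\rad Q\le\dim M\le d$) and of the two elementary final inequalities is fine. The gap is in the bound on the number of relations, hence on the number of free entries. Your claim that one may take $p_i\le d$ because ``a $d$-dimensional module needs at most $d$ relations'' is false: for the Kronecker algebra and $M$ the one-dimensional simple whose projective cover is the three-dimensional projective $P_1$, the minimal presentation is $P_2^2\to P_1\to M\to 0$, so $p_2=2>d=1$. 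What is actually true, and what the paper proves as inequality \eqref{3.7}, is $p_\beta\le\sum_\alpha q_\alpha\dim e_\alpha(\rad\varGamma)e_\beta$; establishing this requires the minimality condition $\Ker\varphi\subset\rad P$, translated into the statement \eqref{3.5a} that no column of the linked strips $H_{j_1},\dots,H_{j_{h(\alpha)}}$ can be made zero simultaneously, followed by a column-by-column rank-maximization argument. That step is the real content of the proof and is absent from your proposal.

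A second, related problem is the arithmetic of the free-entry count. The free blocks of the separated problem correspond not to all of $\mathop{\rm Hom}\nolimits(P_\beta,P_\alpha)\cong e_\alpha\varGamma e_\beta$ but only to its radical part $e_\alpha(\rad\varGamma)e_\beta$ (because $\im\varphi\subset\rad Q$); and even granting $p_i\le d$, your estimate $\sum_{i,j}q_ip_j\delta_i\delta_j=\bigl(\sum_i q_i\delta_i\bigr)\bigl(\sum_j p_j\delta_j\bigr)$ is only bounded by $d^2(\max_i\delta_i)(\sum_j\delta_j)$, which can exceed $d^2\sum_j\delta_j^2$ (for $\delta=(3,1)$ one gets $12>10$). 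The paper reaches the exponent $d^2(\delta_1^2+\dots+\delta_r^2)$ only by combining $\sum_\alpha q_\alpha\le d$ with the sharp bound $p_\beta\le[q_1,\dots,q_r]\cdot[l_{1\beta},\dots,l_{r\beta}]^T$ (padding $P$ so that equality holds), so that the number of free entries equals $[q_1,\dots,q_r]L\cdot([q_1,\dots,q_r]L)^T\le d^2\sum_\beta\delta_\beta^2$ with $\delta_\beta=\dim(\rad\varGamma)e_\beta\le\dim\varGamma e_\beta$. So the bookkeeping you yourself flagged as the ``main obstacle'' is exactly where the proof lives, and the shortcut you propose for it does not work.
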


Without loss of generality, we will prove
Theorem \ref{t2} for basic matrix algebras
(see \eqref{3.00}). Indeed, $A$ is
isomorphic to the subalgebra $B\subset
\End_k A$ consisting of all linear operators
\begin{equation}\label{3.0}
  \hat{a}: x\mapsto ax,\qquad a\in A,
\end{equation}
on the space $_kA$. There exists a basis of
$_kA$ in which the matrices of $B$ form an
algebra $\varGamma_{\underline{n}\times
\underline{n}}$, where $\varGamma\subset
k^{t\times t}$ is a basic matrix algebra and
$\underline{n}=(n_1,\dots,n_t)\in {\mathbb
N}^t$, see \cite[Theorem 1.1]{ser}. By the
Morita theorem \cite{dr_ki}, the categories
of representations of
$\varGamma_{\underline{n}\times
\underline{n}}$ and its basic algebra
$\varGamma$ are equivalent, hence
$$
f(d,A)=f(d,\varGamma_{\underline{n}\times
\underline{n}})= f(d,\varGamma).
$$
Furthermore, the replacement of
$\varGamma_{\underline{n}\times
\underline{n}}$ with $\varGamma$ preserves
the number $r$ of nonisomorphic
indecomposable projective left modules and
reduces their dimensions.

The algebra $\varGamma$ determines the
equivalence relation \eqref{4.0a} in the set
of indices $T=\{1,\dots,t\}$. Let ${\cal
I}_1,\dots, {\cal I}_r$ be the equivalence
classes, put
\begin{equation}\label{3.1'}
 e_{\alpha}=\sum_{i\in {\cal
I}_{\alpha}} e_{ii},
\end{equation}
where $e_{ij}$ are the matrix units of
$k^{t\times t}$. Define the matrix
\begin{equation}\label{3.1''}
L=[l_{\alpha\beta}]_{\alpha,\beta=1}^r,
\qquad l_{\alpha\beta}=\dim
e_{\alpha}Re_{\beta},
\end{equation}
where $R=\rad\varGamma$ is the radical of
$\varGamma$ consisting of all its matrices
with zero diagonal.

\begin{lemma} \label{l3}
If $\varGamma\in k^{t\times t}$ is a basic
matrix algebra of tame type, then
\begin{equation}\label{3.1a}
  f(d,\varGamma)\le
  \sum_{q_1+\dots+q_r\le d}
  4^{[q_1,\dots,q_r]L\cdot
  ([q_1,\dots,q_r]L)^T},
\end{equation}
where $q_1,\dots,q_r$ are nonnegative
integers.
\end{lemma}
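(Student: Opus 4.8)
The plan is to combine Drozd's reduction with the separated matrix estimate of Theorem~\ref{t1'}. Write $P_\alpha=e_\alpha\varGamma$ for the indecomposable projective right $\varGamma$-modules; since $\varGamma$ is basic, $\varGamma/R\cong k^r$ and each simple $S_\alpha=P_\alpha/P_\alpha R$ is one-dimensional. I would organize the modules $M$ by the dimension vector $q=(q_1,\dots,q_r)$ of their top $M/MR\cong S_1^{q_1}\oplus\dots\oplus S_r^{q_r}$, so that the projective cover of $M$ is $P(q):=P_1^{q_1}\oplus\dots\oplus P_r^{q_r}$ and $M\cong\operatorname{coker}\varphi$ for some homomorphism $\varphi\colon Q\to P(q)$ whose image lies in $\rad P(q)=\bigoplus_\beta(e_\beta R)^{q_\beta}$. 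By \cite{dro1} (see also \cite[Sect.~2.5]{ser}), for fixed $q$ the right $\varGamma$-modules with top $q$ are classified up to isomorphism by the separated matrix problem $\varphi\mapsto g\varphi f$, $g\in\Aut P(q)$, $f\in\Aut Q$; as $\varGamma$ is tame this problem is of tame type, so Theorem~\ref{t1'} bounds its number of series (canonical parametric matrices) by $4^{s}$, where $s$ is the number of free entries.

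Two observations make the outer sum finite and the inner count depend only on $q$. First, since $M/MR$ is a quotient of $M$ and each $S_\alpha$ is one-dimensional, $\sum_\alpha q_\alpha=\dim(M/MR)\le\dim M\le d$; this is precisely the index set $q_1+\dots+q_r\le d$ of \eqref{3.1a}, and because the image of $\varphi$ lies in the radical, the top of $\operatorname{coker}\varphi$ is exactly $\bigoplus_\beta S_\beta^{q_\beta}$, so distinct $q$ yield modules with distinct tops and no series is counted twice. Second, put $w_\gamma:=\dim(\rad P(q))e_\gamma=\sum_\alpha q_\alpha\dim e_\alpha R e_\gamma=\bigl([q_1,\dots,q_r]L\bigr)_\gamma$, so that $\sum_\gamma w_\gamma^2=[q_1,\dots,q_r]L\cdot([q_1,\dots,q_r]L)^T$. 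Because the projective cover is minimal, the syzygy $\ker(P(q)\to M)$ lies in $\rad P(q)$, whence the number $p_\gamma$ of generators of type $\gamma$ of the source satisfies $p_\gamma\le\dim(\rad P(q))e_\gamma=w_\gamma$. Thus every $M$ with top $q$ admits a (possibly non-minimal) presentation with source $Q=\bigoplus_\gamma P_\gamma^{w_\gamma}$, and I may fix this single source for all such $M$.

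With $Q=\bigoplus_\gamma P_\gamma^{w_\gamma}$ and target $P(q)$, the matrix of $\varphi$ is a block matrix whose $(\beta,\gamma)$-block has size $q_\beta\times w_\gamma$ with entries in the radical part $e_\beta R e_\gamma$ of $\operatorname{Hom}(P_\gamma,P_\beta)\cong e_\beta\varGamma e_\gamma$ (the semisimple part is excluded because $\operatorname{im}\varphi\subseteq\rad P(q)$). Hence the number of free entries is
\begin{equation*}
s=\sum_{\beta,\gamma}q_\beta\,w_\gamma\,\dim e_\beta R e_\gamma=\sum_\gamma w_\gamma\Bigl(\sum_\beta q_\beta\dim e_\beta R e_\gamma\Bigr)=\sum_\gamma w_\gamma^2=[q_1,\dots,q_r]L\cdot([q_1,\dots,q_r]L)^T .
\end{equation*}
Summing the resulting bound $4^{s}$ over all $q$ with $q_1+\dots+q_r\le d$ then gives \eqref{3.1a}.

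The step I expect to be the main obstacle is making the reduction bookkeeping rigorous: verifying that fixing the source $\bigoplus_\gamma P_\gamma^{w_\gamma}$ in place of the variable minimal source still presents every module with top $q$, that passing to non-minimal presentations neither creates nor merges series, and that the induced $g\varphi f$-problem is exactly a separated problem of tame type to which Theorem~\ref{t1'} applies (in particular that restricting $\varphi$ to radical homomorphisms preserves tameness). Once the inequality $p_\gamma\le w_\gamma$ is secured, the free-entry identity displayed above is a short computation.
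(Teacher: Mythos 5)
Your proposal is correct and follows essentially the same route as the paper: Drozd's reduction to the separated matrix problem $(\varGamma,\varGamma,\rad\varGamma)$, the bound $q_1+\dots+q_r=\dim M/M\!\rad\varGamma\le d$ on the top, the bound $p_\gamma\le([q_1,\dots,q_r]L)_\gamma$ followed by padding the source so that equality holds, the same free-entry count $[q_1,\dots,q_r]L\cdot([q_1,\dots,q_r]L)^T$, and an appeal to Theorem~\ref{t1'}. The only real divergence is your derivation of $p_\gamma\le w_\gamma$ from minimality of the presentation (the syzygy sits in the radical of the cover), where the paper instead argues at the matrix level that a larger $p_\gamma$ would allow one to annihilate a column of $\Phi$ simultaneously in all linked strips by admissible transformations, contradicting $\Ker\varphi\subset\rad P$; the ``bookkeeping'' you flag at the end is handled in the paper exactly as you anticipate, by adjoining summands on which $\varphi$ vanishes and dropping the kernel condition, which can only increase the count.
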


Let us show that \eqref{3.1a} implies
Theorem \ref{t2}. By \eqref{3.1'},
$$ I=e_1+\dots+e_r $$
is a decomposition of the identity of
$\varGamma$ into a sum of minimal orthogonal
idempotents, and so $\varGamma e_1,\dots,
\varGamma e_r$ are all nonisomorphic
indecomposable projective left modules over
$\varGamma$. The number of summands in
\eqref{3.1a} is equal to the number of
solutions of the inequality
\begin{equation}\label{3.1aaa}
x_1+\dots+x_r\le d
\end{equation}
in nonnegative integers; it equals ${\binom
{d+r} r}$ by \cite[Sect. 1.2]{sta}.
  Since $q_{\alpha} \le
d$, $[q_1,\dots,q_r]L\cdot
([q_1,\dots,q_r]L)^T\le d^2[1,\dots,1]L\cdot
([1,\dots,1]L)^T= d^2(\delta_1^2+\dots+
\delta_r^2)$, where $\delta_{\beta} =
  [1,\dots,1]\cdot [l_{1\beta},
  \dots,l_{r\beta}]^T =
  l_{1\beta}+\dots+ l_{r\beta}=
  \dim e_{1}Re_{\beta}+\dots+
  \dim e_{r}Re_{\beta}=
  \dim (e_{1}+\dots+ e_{r})Re_{\beta}=
  \dim Re_{\beta}=
  \dim \varGamma e_{\beta}-1$.
This proves the first inequality in
\eqref{3.1}. We have
$${\binom {d+r} r}\le
(d+1)^2$$ since each $x_i$ in \eqref{3.1aaa}
possesses at most $d+1$ values
$0,1,\dots,d$. We also have
$\delta_1^2+\dots +\delta_r^2\le
(\delta_1+\dots +\delta_r)^2= (\dim
\varGamma e_1+\dots+\dim \varGamma e_r)^2=
(\dim \varGamma (e_1+\dots+ e_r))^2=(\dim
\varGamma)^2\le (\dim A)^2$. This proves the
second inequality in \eqref{3.1}.

\begin{proof}[Proof of Lemma \ref{l3}]
{\it Step 1: reduction to a matrix problem.}
The reduction to a linear matrix problem
given in \cite{ser} is a light modification
of Drozd's reduction \cite{dro1} (see also
\cite{dro2} and \cite{cra}). It bases on the
construction, for every right module $M$
over $\varGamma$, an exact sequence
\begin{gather}
P\stackrel{\varphi}{\longrightarrow} Q
\stackrel{\psi}{\longrightarrow}
M\longrightarrow 0, \label{3.2}\\
\Ker\varphi\subset \rad P, \quad
\im\varphi\subset \rad Q, \label{3.2a}
\end{gather}
where $P$ and $Q$ are projective right
modules. The homomorphism $\varphi$ is
defined by $P$, $Q$, and $M$ up to
transformations
\begin{equation}\label{3.3}
  \varphi\longmapsto g\varphi f,
  \qquad f\in \Aut_{\varGamma} P,\quad g\in
\Aut_{\varGamma} Q.
\end{equation}

Let us show briefly (details in \cite{ser})
that the problem of classifying $\varphi$ up
to these transformations reduces to a
separated matrix problem given by the triple
$(\varGamma,\varGamma,\rad\varGamma)$.

Decompose $P$ and $Q$ from \eqref{3.2} into
direct sums of indecomposable projective
modules:
\begin{equation}\label{3.3'}
 P=(e_1\varGamma)^{p_1}\oplus\dots\oplus
(e_r\varGamma)^{p_r},\quad
Q=(e_1\varGamma)^{q_1}\oplus\dots\oplus
(e_r\varGamma)^{q_r},
\end{equation}
where $X^{l}:=X\oplus\dots\oplus X$ ($l$
times) and $e_i$ are defined by
\eqref{3.1'}. Then the homomorphism
$\varphi$ becomes the $q\times p=
(q_1+\dots+q_r)\times (p_1+\dots+p_r)$
matrix $\varphi=
[\varphi_{xy}]_{x=1,}^q{}_{y=1}^p$, which we
partition into $r$ horizontal and $r$
vertical strips of sizes $q_1,\dots, q_r$
and $p_1,\dots, p_r$. Denote by
$$
\alpha=\alpha(x)\quad\text{and}\quad
\beta=\beta(y)
$$
the indices of the vertical and the
horizontal strips containing $\varphi_{xy}$.
Then $\varphi_{xy}: e_{\beta}\varGamma \to
e_{\alpha}\varGamma$ and is determined by
$\varphi_{xy} (e_{\beta})=
e_{\alpha}\varphi_{xy} (e_{\beta})\in
e_{\alpha}\varGamma$. Since $\varphi_{xy}$
is a homomorphism and $e_{\beta}$ is an
idempotent, $\varphi_{xy}
(e_{\beta})=\varphi_{xy}
(e_{\beta}^2)=\varphi_{xy}
(e_{\beta})e_{\beta}$. Hence, $\varphi_{xy}
(e_{\beta})=e_{\alpha}\varphi_{xy}
(e_{\beta}) e_{\beta}\in e_{\alpha}\varGamma
e_{\beta}$. By \eqref{3.2a}, $$
\im\varphi\subset \rad Q
=(e_1R)^{q_1}\oplus\dots\oplus (e_rR)^{q_r},
$$ where $R=\rad\varGamma$. We have
$\varphi_{xy} (e_{\beta(y)})\in
e_{\alpha(x)}R e_{\beta(y)}$.

If a matrix $a=[a_{ij}]_{i,j=1}^t\in
\varGamma$ belongs to $e_{\alpha}R
e_{\beta}$, then it is determined by its
submatrix $\bar{a}=[a_{ij}]_{(i,j)\in {\cal
I}_{\alpha}\times{\cal I}_{\beta}}$ since
all entries outside of $\bar{a}$ are zero by
\eqref{3.1'}. The size of $\bar{a}$ is
$h(\alpha)\times h(\beta)$, where $
h(\alpha)$ is the number of elements in
${\cal I}_{\alpha}$. Therefore, the
homomorphism $\varphi=
[\varphi_{xy}]_{x=1,}^q{}_{y=1}^p$ is
determined by the block matrix
\begin{equation}\label{3.4}
[\overline{\varphi_{xy}
(e_{\beta(y)})}]_{x=1,}^q{}_{y=1}^p
\end{equation}
of size
$$
(q_1h(1)+\dots +q_rh(r))\times
(p_1h(1)+\dots +p_rh(r)).
$$
Permuting rows and columns of this matrix to
order them in accordance with their position
in $\varGamma$, we obtain a block matrix
${\Phi}\in R_{\underline{m}\times
\underline{n}}$, where $m_i:=q_{\alpha}$ if
$i\in{\cal I}_{\alpha}$ and $n_j:=p_{\beta}$
if $j\in{\cal I}_{\beta}$. In the same way,
the automorphisms $f\in \Aut_{\varGamma} P$
and $g\in \Aut_{\varGamma} Q$ are determined
by nonsingular matrices from
$\varGamma_{\underline{m}\times
\underline{m}}$ and
$\varGamma_{\underline{n}\times
\underline{n}}$.

Hence, the problem of classifying modules
over $\varGamma$ reduces to the canonical
form problem for matrices ${\Phi}\in
R_{\underline{m}\times \underline{n}}$ up to
transformations
\begin{equation}\label{3.5}
  \Phi\longmapsto F\Phi G,
  \qquad F\in \varGamma_{\underline{m}\times
\underline{m}}^{*},\quad G\in
\varGamma_{\underline{n}\times
\underline{n}}^{*}.
\end{equation}
Let
\begin{equation}\label{3.5aa}
H_1,\dots,H_t
\end{equation}
be the vertical strips of $\Phi$ with
respect to $\underline{m}\times
\underline{n}$ partition. The condition
$\Ker\varphi\subset \rad P$ from
\eqref{3.2a} means that
\begin{equation}       \label{3.5a}
\parbox{25em}
{there are not an equivalence class ${\cal
I}_{\alpha}=\{j_1,\dots,j_{h(\alpha)}\}$ and
a transformation \eqref{3.5} making zero the
last column in each of
$H_{j_1},\dots,H_{j_{h(\alpha)}}$
simultaneously.}
\end{equation}
\medskip

\noindent {\it Step 2: an estimate.} Let the
module $M$ in \eqref{3.2} has dimension at
most $d$. By \eqref{3.2}, \eqref{3.3'}, and
the condition $\im\varphi\subset \rad Q$
from \eqref{3.2a},
\begin{equation}\label{3.6}
 q_1+\dots+ q_r= \dim Q/ \rad Q
 \le \dim Q/\im \varphi =
\dim M\le d.
\end{equation}
Each summand
$(e_{\alpha}\varGamma)^{p_{\alpha}}$ in the
decomposition \eqref{3.3'} of $P$ determines
the equivalence class ${\cal I}_{\alpha}=
\{j_1,\dots,j_{h(\alpha)}\}$ and corresponds
to the strips
$H_{j_1},\dots,H_{j_{h(\alpha)}}$ of $\Phi$
(see \eqref{3.5aa}); these strips are
reduced by simultaneous elementary
transformations and each of them has
$p_{\alpha}$ columns.

Let us prove that
\begin{equation}\label{3.7}
 p_{\alpha}\le [q_1,\dots,q_r]\cdot
 [l_{1\alpha},\dots,l_{r\alpha}]^T,
\end{equation}
where $[l_{1\alpha},\dots,l_{r\alpha}]^T$ is
a column of the matrix \eqref{3.1''}. Put
$$
n_{\iota}=[q_1,\dots,q_r]\cdot [\dim
e_1\varGamma
e_{j_{\iota}j_{\iota}},\dots,\dim
e_r\varGamma e_{j_{\iota}j_{\iota}}]^T,
\qquad 1\le \iota\le h(\alpha),
$$
where $e_{jj}$ are matrix units. By
\eqref{3.1'}, $$[q_1,\dots,q_r]\cdot
[l_{1\alpha},\dots,l_{r\alpha}]^T= n_1+\dots
+n_{h(\alpha)}.$$

Suppose that \eqref{3.7} does not hold,
i.e.
$$
p_{\alpha}\ge n_1+\dots +n_{h(\alpha)}+1,
$$
and show that there is a transformation
making zero the $(n_1+\dots
+n_{h(\alpha)}+1)$st column in each of
$H_{j_1},\dots,H_{j_{h(\alpha)}}$
simultaneously, to the contrary with
\eqref{3.5a}. It suffices to show that there
is a transformation making zero the
$(n_1+\dots +n_{h(\alpha)}+1)$st column in
all free blocks from
$H_{j_1},\dots,H_{j_{h(\alpha)}}$ since the
other blocks are their linear combinations.

The number of rows in free blocks of
$H_{j_1}$ is equal to $ n_1$; by elementary
transformations of columns, we maximize the
rank of the first $n_1$ columns of these
blocks, and then make zero the other their
columns (by the definition of admissible
transformations, the same transformations
are produced within the strips
$H_{j_2},\dots,H_{j_{h(\alpha)}}$).  The
number of rows in free blocks of $H_{j_2}$
is $n_2$; by elementary transformations with
the $n_1+1,n_1+2,\dots$ columns, we maximize
the rank of the $n_1+1,n_1+2,\dots$
$n_1+n_2$ columns of these blocks, and then
make zero the $n_1+n_2+1,n_1+n_2+2,\dots$
columns in free blocks of $H_{j_2}$ (the
same transformations are produced within the
strips $H_{j_1}, H_{j_3} \dots,
H_{j_{h(\alpha)}}$; they do not spoil the
made zeros in $H_{j_1}$), and so on. At
last, we reduce $H_{j_{h(\alpha)}}$ and
obtain $\Phi$ in which the $(n_1+\dots+
n_{h(\alpha)}+1)$st column is zero in all
free boxes of
$H_{j_1},\dots,H_{j_{h(\alpha)}}$. This
proves \eqref{3.7}.

Therefore, each module $M$ of dimension at
most $d$ may be given by a sequence
\eqref{3.2}, in which $P$ and $Q$ are of the
form \eqref{3.3'} with $p_i$ and $q_j$
satisfying \eqref{3.6} and \eqref{3.7}. To
make
$$
p_{\alpha}= [q_1,\dots,q_r]\cdot
[l_{1\alpha},\dots,l_{r\alpha}]^T,
$$
we add, if necessary, additional summands to
the decomposition \eqref{3.3'} of $P$ and
put $\varphi$ equaling 0 on the new
summands. Correspondingly, we omit the first
condition in \eqref{3.2a} and the condition
\eqref{3.5a} on the matrix $\Phi$. The
number of free entries in $\Phi$ becomes
equal to
$$
 [q_1,\dots,q_r]L[p_1,\dots,p_r]^T=
 {[q_1,\dots,q_r]L\cdot
  ([q_1,\dots,q_r]L)^T};
$$
this proves \eqref{3.1a} in view of
\eqref{3.6} and Theorem \ref{t1'}.
\end{proof}

\end{document}